\newcommand{\vol}{\text{Vol}}
\newcommand{\sign}{\text{sign}}
\newlength\savedwidth
\newcommand\thickhline{\noalign{\global\savedwidth\arrayrulewidth\global\arrayrulewidth 2pt}%
\hline
\noalign{\global\arrayrulewidth\savedwidth}}
\newcommand{\taylor}[2]{\left(#1\right)_{#2}}
\newcommand{\deriv}[3]{ {#3}^{({#1})}
}
\newcommand{\tmax}{1.1}
\newcommand{\xmax}{1.123841}
\newcommand{\tintv}{\ensuremath{[0, \tmax]}}
\newcommand{\xintv}{\ensuremath{[0, x(\tmax)]}}
\newcommand{\goalOrder}{7}
\newcommand{\resultBound}{0.50344}
\newcommand{\integers}{\mathbb{Z}}
\newcommand{\reals}{\mathbb{R}}
\newcommand{\scie}{\mathrm{e}\!-\!}
\newtheorem{lemma}{Lemma}
\newtheorem{theorem}{Theorem}
\newtheorem{corollary}{Corollary}
\title{Central diagonal sections of the $n$-cube}
\author{F. A. Bartha$^1$}
\address{Department of Applied and Numerical Mathematics, University of Szeged, Aradi v\'ertan\'uk tere 1, 6720 Szeged, Hungary}
\thanks{$^1$Supported by NKFIH KKP 129877 and EFOP-3.6.2-16-2017-00015 grants and by the grant TUDFO/47138-1/2019-ITM of the
	Ministry for Innovation and Technology, Hungary.}
\email{barfer@math.u-szeged.hu}
\author{F. Fodor$^2$}
\address{Department of Geometry, University of Szeged, Aradi v\'ertan\'uk tere 1, 6720 Szeged, Hungary}
\thanks{$^2$Supported by grant TUDFO/47138-1/2019-ITM of the Ministry for Innovation and Technology, Hungary, and by Hungarian National 
	Research, Development and Innovation Office NKFIH grant KF129630.}
\email{fodorf@math.u-szeged.hu}
\author{B. Gonzalez Merino$^3$}
\address{Departamento de Did\'actica de la Matem\'atica, Facultad de Educaci\'on, Universidad de Murcia, 30100-Murcia, Spain}
\thanks{$^3$ This research is a result of the activity developed within the framework of the Programme in Support
of Excellence Groups of the Regi\'on de Murcia, Spain, by Fundaci\'on S\'eneca, Science and Technology
Agency of the Regi\'on de Murcia. Partially supported by Fundaci\'on S\'eneca project 19901/GERM/15,
Spain, and by MICINN Project PGC2018-094215-B-I00 Spain.}
\email{bgmerino@um.es}
\keywords{Cube, sections, volume}
\date{\today}
\begin{document}
\maketitle

\begin{abstract}
We prove that the volume of central hyperplane sections of a unit cube in $\mathbb{R}^n$ orthogonal to a diameter of the cube is a strictly monotonically increasing function of the dimension for $n\geq 3$.
Our argument uses an integral formula that goes back to P\'olya \cite{P} (see also \cite{H} and \cite{B86}) for the volume of central sections of the cube, and Laplace's method to estimate the asymptotic behaviour of the integral. First we show that monotonicity holds starting from some specific $n_0$. Then, using interval arithmetic (IA) and automatic differentiation (AD), we compute
an explicit bound for $n_0$, and check the remaining cases between $3$ and $n_0$ by direct computation.
\end{abstract}

\section{Introduction}
Let $C^n=[-\frac 12, \frac 12]^n$ be the unit cube in $\mathbb{R}^n$, and for $u\in\mathbb{R}^n$ let $H(u)=u^\perp$, the hyperplane through $o$ orthogonal to $u$. We are interested in determining $\vol_{n-1}(C^n\cap H(u_0))$ in the special case when $u_0=(1,\ldots, 1)\in \mathbb{R}^n$ is parallel to a main diagonal of $C^n$. 

Hensley \cite{H} described a probabilistic argument, whose origin he attributed to Selberg, proving that $\vol_{n-1}(C^n\cap H(u_0))\to\sqrt{6/\pi}$ as $n\to\infty$, and he conjectured that $\max_u \vol_{n-1}(C^n\cap H(u))\leq \sqrt 2$. This conjecture was proved by Ball \cite{B86}, who proved a integral formula for the volume of sections that goes back to P\'olya \cite{P}, which, when specialized to the case of $H(u_0)$, is the following:
\begin{equation}\label{ball-formula}
I(n):=\vol_{n-1}(C^n\cap H(u_0))=\frac{2\sqrt n}{\pi}\int_0^{+\infty}\left (\frac{\sin t}{t}\right )^n dt.
\end{equation}
It is an interesting fact that the maximum volume hyperplane section of the cube occurs when the hyperplane is orthogonal to $u=(1,1,0,\ldots, 0)$, 
and not for hyperplanes orthogonal to the main diagonal. The limit $\sqrt{6/\pi}$ for the main diagonal is slightly less than $\sqrt 2$. 

It is known that the integral \eqref{ball-formula} can be evaluated explicitly as
\begin{equation}\label{Frank-Riede}
\vol_{n-1}(C^n\cap H(u_0))=\frac{\sqrt n}{2^{n}(n-1)!}\sum_{i=0}^{n}(-1)^i{n\choose i}(n-2i)^{n-1}\sign (n-2i),
\end{equation}
see Goddard \cite{Go45}, Grimsey \cite{Gr45}, Butler \cite{Bu60}, and Frank and Riede \cite{FR}.
Numerical computations with \eqref{Frank-Riede} suggest that $\vol_{n-1}(C^n\cap H(u_0))$ is a strictly monotonically increasing function of $n$ while it tends to the limit $\sqrt{6/\pi}$ as $n\to\infty$. However, \eqref{Frank-Riede} does not seem to lend itself as a tool for proving this monotone property. 

\begin{figure}[h]
\centerline{\includegraphics[scale=0.7]{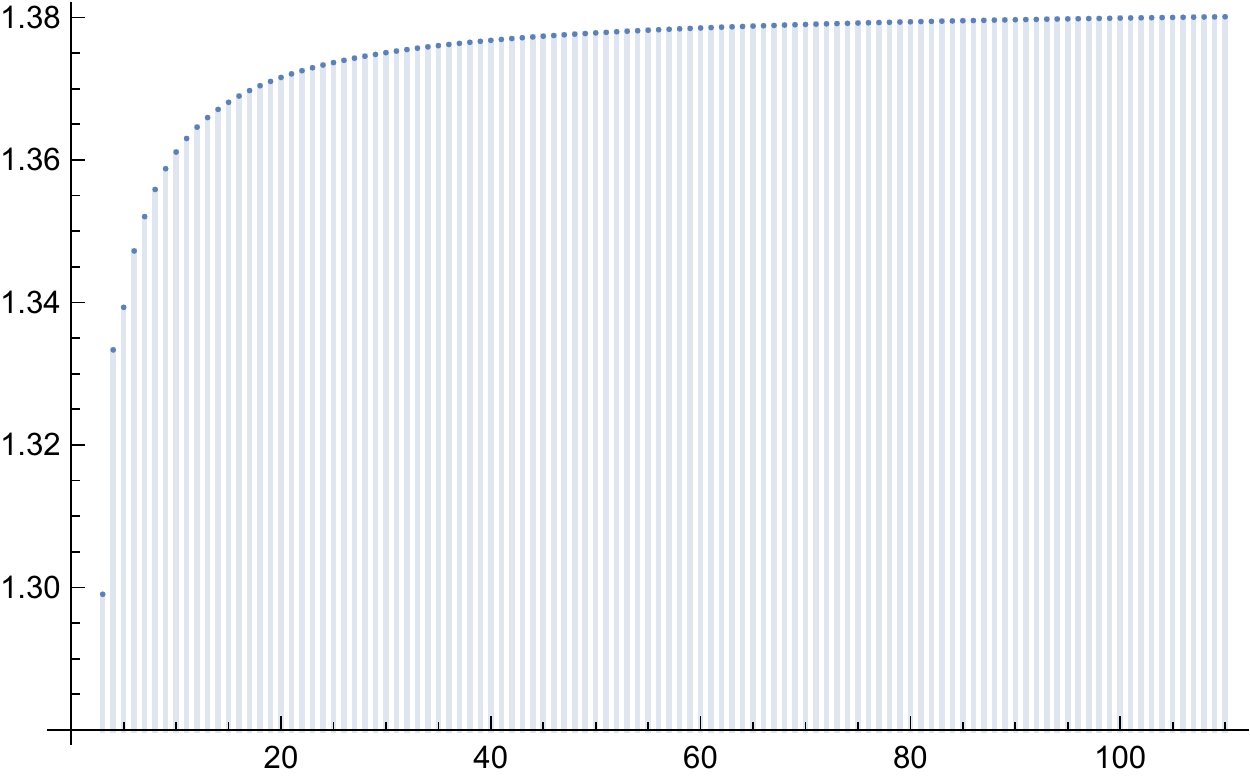}}
\caption{$\vol_{n-1} (C^n\cap H(u_0))$ for $3\leq n\leq 110$ plotted by {\em Mathematica}.\label{figure}}
\end{figure}

Recently, K\"onig and Koldobsky proved that, in fact, $\vol_{n-1}(C^n\cap H)\leq \sqrt{6/\pi}$ for all $n\geq 2$, see \cite[Prop. 6(a)]{KK19}. We also point out the recent result of Aliev \cite{Aliev2} (see also \cite{Aliev}) about hyperplane sections of the cube, in which he proves that
\begin{equation}\label{Aliev}
\frac{\sqrt n}{\sqrt{n+1}}\leq \frac{I(n+1)}{I(n)}
\end{equation}
which is slightly less than the monotonicity of $\vol_{n-1}(C^n\cap H(u_0))$.

For a more detailed overview of the currently known information on sections of the cube and for further references, see, for example, the books of Berger \cite{Berger} and Zong \cite{Z}, and the papers by Ball \cite{B86, B89},  K\"onig, Koldobsky \cite{KK19} and Ivanov, Tsiutsiurupa \cite{IT}.

Our main result is the following.
\begin{theorem}\label{main}
The volume $\vol_{n-1}(C^n\cap H(u_0))$ is a strictly monotonically increasing function of $n$ for all $n \geq 3$.
\end{theorem}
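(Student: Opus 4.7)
The plan is to combine an asymptotic analysis of the integral (\ref{ball-formula}) via Laplace's method with a rigorous numerical check of the small cases. Since the figure and the known limit $I(n)\to\sqrt{6/\pi}$ together with Hensley--Ball--K\"onig--Koldobsky's upper bound tell us $I(n)\uparrow \sqrt{6/\pi}$, the natural route is to produce an explicit asymptotic expansion with controlled remainder, read off monotonicity at the leading order, and then handle small $n$ by a different, computer-assisted route.

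First I would make the scaling substitution $t=s\sqrt{6/n}$, obtaining
\begin{equation*}
I(n)=\frac{2\sqrt{6}}{\pi}\int_0^{+\infty} f_n(s)\,ds,\qquad f_n(s)=\left(\frac{\sin(s\sqrt{6/n})}{s\sqrt{6/n}}\right)^n,
\end{equation*}
so that $f_n(s)\to e^{-s^2}$ pointwise. Using the Taylor expansion $\log(\sin u/u)=-u^2/6-u^4/180-\dots$ one formally obtains an asymptotic expansion of the form $I(n)=\sqrt{6/\pi}\bigl(1-\tfrac{3}{20n}+\tfrac{c_2}{n^2}+\dots\bigr)$. The key observation is that the leading correction $-3/(20n)$ is \emph{increasing} in $n$, and its increment $\tfrac{3}{20}\bigl(\tfrac1n-\tfrac1{n+1}\bigr)=\tfrac{3}{20\,n(n+1)}$ is of order $n^{-2}$; so if the next-order remainder can be bounded uniformly by $C/n^{2}$ for a small enough constant $C$, then for $n$ beyond an explicit threshold $n_0$ the first correction dominates and $I(n+1)>I(n)$.

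To make this rigorous I would split the integral at some $\delta=\delta(n)$ (for instance $\delta=n^{-1/4}$). On the tail $[\delta,\infty)$ I would use the bound $|\sin t/t|\le \sin\delta/\delta$ on the interval up to $\pi$ together with $|\sin t/t|\le 1/t$ thereafter, which yields an exponentially small contribution that is harmless. On the Laplace region $[0,\delta]$ I would expand $n\log(\sin(s\sqrt{6/n})/(s\sqrt{6/n}))=-s^2-s^4/(5n)+r_n(s)$ via Taylor's theorem with an explicit remainder estimate for the analytic function $\log(\sin u/u)$, and integrate term by term against the Gaussian density. This yields a formula
\begin{equation*}
I(n)=\sqrt{\frac{6}{\pi}}-\frac{3}{20n}\sqrt{\frac{6}{\pi}}+R(n),\qquad |R(n)|\le C/n^2,
\end{equation*}
valid for $n\ge n_1$ with effective constants $C$ and $n_1$. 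Combining this with the telescoping identity $I(n+1)-I(n)=\tfrac{3}{20n(n+1)}\sqrt{6/\pi}+R(n+1)-R(n)$ pins down an explicit $n_0$ beyond which monotonicity is certain.

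Finally, for $3\le n<n_0$ one just needs to verify $I(n+1)>I(n)$ by honest numerical computation. The explicit formula (\ref{Frank-Riede}) is a finite sum of rationals times $\sqrt{n}$, so interval arithmetic gives certified rational enclosures of $I(n)$; automatic differentiation can be used to bound the higher derivatives of the integrand in (\ref{ball-formula}) needed for rigorous quadrature if one prefers to work with the integral formulation directly. The main obstacle is the quantitative Step 2: producing a bound on the Laplace remainder tight enough that $n_0$ is small enough to be swept by Step 3 on a computer. This requires a careful uniform Taylor remainder estimate for $n\log(\sin(s\sqrt{6/n})/(s\sqrt{6/n}))$ on a window that shrinks with $n$, and a tail bound for $|\sin t/t|^n$ whose exponential decay rate is explicit; everything else is bookkeeping.
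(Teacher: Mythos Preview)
Your plan matches the paper's: Laplace's method to isolate the $-\tfrac{3}{20n}$ correction, an explicit threshold $n_0$, and a computer check of $3\le n\le n_0$ via \eqref{Frank-Riede}. There is, however, one genuine soft spot in your outline that the paper handles differently.

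You expand to $I(n)=\sqrt{6/\pi}\bigl(1-\tfrac{3}{20n}\bigr)+R(n)$ with $|R(n)|\le C/n^2$ and then write $I(n+1)-I(n)=\sqrt{6/\pi}\,\tfrac{3}{20n(n+1)}+R(n+1)-R(n)$. But the main term here is itself $\sim \tfrac{3}{20}\sqrt{6/\pi}\,n^{-2}$, while $|R(n+1)-R(n)|$ is only bounded by $2C/n^2$, of the \emph{same} order. So nothing is gained by taking $n$ large: monotonicity follows only if the constant satisfies $2C<\tfrac{3}{20}\sqrt{6/\pi}\approx 0.207$, which a crude remainder bound need not deliver. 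You flag this (``small enough $C$''), but as written there is no mechanism guaranteeing it.

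The paper sidesteps this by pushing the expansion one term further. Instead of scaling $t=s\sqrt{6/n}$, it makes the substitution $\sin t/t=e^{-x^2/6}$, which renders the exponential exactly Gaussian and pushes all the analytic content into the Jacobian $t'(x)$, independent of $n$. Expanding $t'(x)=1-\tfrac{x^2}{20}-\tfrac{13x^4}{30240}+R_6(x)$ with $|R_6(x)|\le \tfrac{R}{6!}x^6$ yields
\[
I(n)=\sqrt{\tfrac{6}{\pi}}\Bigl(1-\tfrac{3}{20n}-\tfrac{13}{1120n^2}\Bigr)+O(n^{-5/2}),
\]
so the remainder is now $o(n^{-2})$ and the increment $\tfrac{3}{20n(n+1)}$ dominates automatically for large $n$. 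The only constant that needs quantifying is $R=\sup_{[0,x(a)]}|t^{(7)}|$, and this is where the paper invokes interval arithmetic and automatic differentiation (not for quadrature of $I(n)$, but to bound this single derivative), obtaining $R\le 0.50344$ and hence $n_0<145$. In your framework the analogous fix is simply to carry the $s^4/(5n)$ term through to the $-\tfrac{13}{1120n^2}$ contribution and bound the $O(s^6/n^2)$ tail, which after integration gives an $O(n^{-5/2})$ remainder; then the rest of your plan goes through.
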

Theorem \ref{main} directly yields the following corollary (which has already been proved by K\"onig and Koldobsky \cite{KK19}), and slightly improves the estimate \eqref{Aliev} of Aliev mentioned above.
\begin{corollary}
	For any integer $n\geq 2$, it holds that  
	$$\vol_{n-1}(C^n\cap H(u_0))< \sqrt{\frac{6}{\pi}} .$$
	and this upper bound is best possible.
\end{corollary}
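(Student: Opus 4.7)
The plan is to obtain the corollary by packaging Theorem~\ref{main} together with the asymptotic limit $\lim_{n\to\infty} I(n) = \sqrt{6/\pi}$ established by Hensley and already recalled in the introduction. The only additional input is the elementary observation that a strictly increasing real sequence that converges to a limit $L$ has every one of its terms strictly below $L$.

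Concretely, Theorem~\ref{main} supplies that the sequence $\bigl(I(n)\bigr)_{n \geq 3}$ is strictly monotonically increasing, and Hensley's result supplies the limit $\sqrt{6/\pi}$. Applying the elementary observation, I get $I(n) < \sqrt{6/\pi}$ for every $n \geq 3$. Best-possibility is immediate from the same convergence: for any $c < \sqrt{6/\pi}$ one can pick $n$ large enough that $I(n) > c$, so $\sqrt{6/\pi}$ cannot be replaced by any smaller constant.

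A small comment on the low-dimensional boundary: evaluating \eqref{ball-formula} at $n = 2$ gives $I(2) = \tfrac{2\sqrt 2}{\pi} \cdot \tfrac{\pi}{2} = \sqrt 2$, which actually lies above $\sqrt{6/\pi}$ (geometrically, this is the length of the diagonal of the unit square). So as phrased the inequality cannot hold for $n = 2$, and I would either interpret the statement as requiring $n \geq 3$ or treat the ``$n \geq 2$'' in the hypothesis as a typographical matter; the $n = 3$ base case is in any event already subsumed by Theorem~\ref{main}.

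Since all the serious work has been done in proving Theorem~\ref{main}, there is no real obstacle in the corollary itself. The only mild subtlety is recognising that strict monotonicity plus a finite limit automatically gives a strict inequality against that limit for every term of the sequence, which is why Theorem~\ref{main} yields something genuinely slightly sharper than Aliev's estimate \eqref{Aliev}.
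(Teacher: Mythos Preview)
Your argument is exactly the one the paper intends: it states only that ``Theorem~\ref{main} directly yields the following corollary,'' i.e.\ strict monotonicity for $n\geq 3$ together with Hensley's limit $\sqrt{6/\pi}$ gives the strict upper bound, and convergence to that limit shows it is best possible. Your observation about $n=2$ is also correct --- $I(2)=\sqrt{2}>\sqrt{6/\pi}$ --- so the hypothesis should read $n\geq 3$; this appears to be a slip in the paper's statement of the corollary.
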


The rest of the paper is organized as follows. In Section~\ref{sec:monotone} we use Laplace's method to study the behaviour of the integral \eqref{ball-formula}, and prove the existence of an integer $n_0$ with the property that $\vol_{n-1}(C^n\cap H(u_0))$ is an increasing sequence for all $n\geq n_0$.
In the Appendix, using interval arithmetic, automatic differentiation, and some analytical arguments, we provide rigorous numerical estimates, which we use in 
Section~\ref{sec:final} to obtain an explicit upper bound on $n_0$. Finally, we check monotonicity for $3\leq n\leq n_0$ by calculating the value of of $\vol_{n-1}(C^n\cap H(u_0))$ using \eqref{Frank-Riede}, thus concluding the proof of Theorem \ref{main}.

\section{Proof of the monotonicity for large $n$}\label{sec:monotone}
In this section, we prove the following statement which is the most important ingredient of the proof of Theorem \ref{main}.
\begin{theorem}\label{main-2}
There exists an integer $n_0$ such that $\vol_{n-1}(C^n\cap H(u_0))$ is a strictly monotonically increasing function of $n$ for all $n\geq n_0$. 
\end{theorem}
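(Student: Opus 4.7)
The plan is to establish an asymptotic expansion
\begin{equation*}
I(n) = \sqrt{\frac{6}{\pi}}\left(1 - \frac{3}{20\, n}\right) + O\!\left(\frac{1}{n^2}\right)
\end{equation*}
by applying Laplace's method to the integral \eqref{ball-formula}. Granted such an expansion, the monotonicity is immediate: since $1/(n+1) - 1/n = -1/(n(n+1))$, one gets
\begin{equation*}
I(n+1) - I(n) = \frac{3\sqrt{6/\pi}}{20\, n(n+1)} + O\!\left(\frac{1}{n^3}\right),
\end{equation*}
whose leading term of order $1/n^2$ is strictly positive. Thus $I(n+1) > I(n)$ for every $n$ beyond some threshold $n_0$.

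To carry out the Laplace analysis, I would rescale via $t = s/\sqrt n$ in \eqref{ball-formula} to move the peak of the integrand to a fixed location:
\begin{equation*}
I(n) = \frac{2}{\pi}\int_0^\infty \left(\frac{\sin(s/\sqrt n)}{s/\sqrt n}\right)^{\!n} ds,
\end{equation*}
and split the integration at $s = \delta\sqrt n$ for a small fixed $\delta > 0$. On the tail $s \geq \delta\sqrt n$ we have $t \geq \delta$ and $|\sin t/t| \leq \rho(\delta) < 1$ uniformly, so the tail is bounded above by $C\rho^n$ (using also $|\sin t/t| \leq 1/t$ to control $t \to \infty$) and is therefore exponentially small, hence negligible compared to any $O(1/n^k)$.

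On the core $s \in [0,\delta\sqrt n]$ the integrand is positive and equals $\exp[n\,g(s/\sqrt n)]$ with $g(x) := \log(\sin x/x)$. Using the Taylor expansion
\begin{equation*}
g(x) = -\frac{x^2}{6} - \frac{x^4}{180} + R(x),\qquad |R(x)| \leq C\,x^6 \text{ for } x \in [0,\delta],
\end{equation*}
one obtains $n\,g(s/\sqrt n) = -s^2/6 - s^4/(180\,n) + O(s^6/n^2)$ on the core. Exponentiating, expanding the outer factor in $1/n$, and integrating term by term against the Gaussian weight $e^{-s^2/6}$ recovers the claimed expansion: from the moments $\int_0^\infty e^{-s^2/6}\,ds = \tfrac{1}{2}\sqrt{6\pi}$ and $\int_0^\infty s^4 e^{-s^2/6}\,ds = \tfrac{27}{2}\sqrt{6\pi}$, the $1/n$-coefficient is exactly $-\tfrac{1}{180}\cdot 27 = -\tfrac{3}{20}$ times $\sqrt{6/\pi}$, after absorbing the prefactor $2/\pi$.

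The main obstacle will be turning this formal calculation into rigorous, uniform-in-$n$ estimates: pinning down an explicit constant $C$ in the sixth-order remainder $R$ on $[0,\delta]$, verifying that the tail is genuinely beyond every polynomial order, and tracking the remainder through the exponentiation (where large powers of $s$ can appear after expanding $\exp(-s^4/(180n) + \cdots)$) and the subsequent Gaussian integration so that the final error is truly $O(1/n^2)$ and not, say, $O((\log n)/n^2)$. None of this is conceptually deep, but the explicit bookkeeping is exactly what the Appendix will then refine to produce a numerical value of $n_0$, reducing the proof of Theorem \ref{main} to a finite check of \eqref{Frank-Riede} for $3 \leq n < n_0$.
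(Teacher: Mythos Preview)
Your strategy---Laplace's method on \eqref{ball-formula} to extract the asymptotic $I(n)=\sqrt{6/\pi}\,(1-\tfrac{3}{20n})+\cdots$---is exactly right in spirit, and your computation of the $1/n$ coefficient is correct. But there is a genuine gap in the order bookkeeping. From an expansion $I(n)=A-B/n+O(1/n^2)$ you \emph{cannot} conclude $I(n+1)-I(n)=B/(n(n+1))+O(1/n^3)$: the difference of two unspecified $O(1/n^2)$ quantities is itself only $O(1/n^2)$, which is the \emph{same} order as the putative main term $B/(n(n+1))$, so positivity does not follow. To repair this you must push the expansion of $I(n)$ one step further, to $I(n)=\sqrt{6/\pi}\,(1-\tfrac{3}{20n}-\tfrac{13}{1120n^2})+o(1/n^2)$; then the difference has a positive main term of order $1/n^2$ and a strictly smaller error. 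In your setup this means retaining the $-x^6/2835$ term of $g(x)$ \emph{and} the quadratic contribution $\tfrac12(s^4/(180n))^2$ from the exponential---precisely the ``large powers of $s$ after exponentiation'' issue you yourself flag as the main obstacle, now unavoidable rather than merely cosmetic.

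The paper carries out this extra order, but by a cleaner technical route that you may prefer: instead of expanding $g$ and then the exponential, it substitutes $x=\sqrt{-6\,g(t)}$, so that $(\sin t/t)^n=e^{-nx^2/6}$ exactly and the integral becomes $\tfrac{2\sqrt n}{\pi}\int_0^{x(a)} e^{-nx^2/6}\,t'(x)\,dx$. All the analytic complexity is then in the Jacobian $t'(x)$, which is expanded as a Taylor polynomial $1-\tfrac{x^2}{20}-\tfrac{13x^4}{30240}+R_6(x)$ with $|R_6(x)|\le (R/6!)\,x^6$; integrating each term against the pure Gaussian gives the two-term asymptotic directly, with error $e_2(n)=O(n^{-5/2})$ coming from the $x^6$ remainder. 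This completely sidesteps the need to control exponentials of remainders, and is what makes the subsequent extraction of an explicit $n_0$ tractable.
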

\begin{proof}
We are going to examine the behaviour of the integral: 
$$I(n)=\frac{2\sqrt n}{\pi}\int_0^{+\infty}\left (\frac{\sin t}{t}\right )^n dt, \quad n\geq 3.$$
We wish to prove that there exists an $n_0$ such that $I(n)$ is strictly monotonically increasing for all $n\geq n_0$.

We start the argument by restricting the domain of integration to a finite interval that contains most of the integral as $n\to\infty$.
If $a$ fixed, with $1<a<\pi/2$, then for $n\geq 2$ it holds that
$$\frac{2\sqrt n}{\pi}\int_a^{+\infty}\left |\frac{\sin t}{t}\right |^n dt<\frac{2\sqrt n}{\pi}\int_a^{+\infty}t^{-n}dt=\frac{2\sqrt n}{\pi} \frac{a^{-n+1}}{n-1}<2 a^{-n}=:e_1(n).$$

Note that the function $e_1(n)$ tends to $0$ exponentially fast as $n\to +\infty$. Let $a$ be fixed, say, $a=1.1$, 
and define 
\begin{equation}\label{I_a}
I_a(n):=\frac{2\sqrt n}{\pi}\int_0^{a}\left (\frac{\sin t}{t}\right )^n dt, \quad \text{ for }n\geq 3.
\end{equation}

Then
$$|I(n)-I_a(n)|<e_1(n)\quad  \text{ for } n\geq 3.$$

We will use Laplace's method to study the behaviour of $I_a(n)$.
Let us make the following change of variables
$$\frac{\sin t}{t}=e^{-x^2/6}, \text{ thus } x=\sqrt{-6\log \frac{\sin t}{t}},$$
where we define the value of $\sin t/t$ to be $1$ at $t=0$. 
Therefore, $x(t)$ is analytic in the interval $[0,a]$. 
Note that $x(0)=0$, and $x'(t)>0$ for all $t\in [0,t]$. Thus, $x(t)$ maps $[0,a]$ bijectively onto $[0,x(a)]$, and so it has an inverse $t(x):[0,x(a)]\to [0,a]$, which is also analytic in $[0, x(a)]$ by the Lagrange Inversion Theorem because $x'(t)\neq 0$ for $t\in [0,a]$.  
In our case, 
$1.07<x(a)=x(1.1)=1.07768<1.08$.

The Taylor series of $x(t)$ around $t=0$ begins with the terms 
$$x=t+\frac{t^3}{60}+\frac{139t^5}{151200}+\frac{83t^7}{1296000}
+\ldots.$$


We can get the first few terms of the the Taylor series expansion of $t=t(x)$ around $x=0$ by inverting the Taylor series of $x(t)$ at $t=0$ as follows:
$$t(x)=x-\frac{x^3}{60}-\frac{13x^5}{151200}+\frac{x^7}{33600}
+\ldots.$$
Then
$$t'(x)=1-\frac{x^2}{20}-\frac{13x^4}{30240}+R_6(x)$$
is the order $5$ Taylor polynomial of $t'(x)$ around $x=0$ (observe that the degree $5$ term is zero), and for the Lagrange remainder term $R_6(x)$, it holds that
$$R_6(x)=\frac{t^{(7)}(\xi)}{6!}x^6$$
for some $\xi\in (0,x)$ (depending on $x$). 
Since $t(x)$ is analytic in $[0,x(a)]$, in particular 
the seventh derivative of $t(x)$ is analytic too, and thus it is a continuous function. Then the Extreme Value Theorem yields that $t^{(7)}$ attains its maximum in $[0,x(a)]$, and thus $|t^{(7)}(x)|\leq R$, for some $R>0$ and every $x\in[0,x(a)]$. 
Then we can use the following estimate on $x\in [0, x(a)]$:
\begin{equation}\label{remainder}
|R_6(x)|\leq \frac{R}{6!}x^6,
\end{equation}

Therefore, after the change of variables, we need to evaluate
\begin{align*}
I_a(n)&=\frac{2\sqrt n}{\pi}\int_0^{x(a)}e^{-nx^2/6} t'(x) dx\\
&=\frac{2\sqrt n}{\pi}\int_0^{x(a)}e^{-nx^2/6}\left (1-\frac{x^2}{20}
-\frac{13x^4}{30240}
+R_{6}(x)\right )dx\\
&=\frac{2\sqrt n}{\pi}\int_0^{x(a)}e^{-nx^2/6}\left (1-\frac{x^2}{20}
-\frac{13x^4}{30240}
\right )dx\\
&+\frac{2\sqrt n}{\pi}\int_0^{x(a)}e^{-nx^2/6} R_{6}(x) dx.
\end{align*}

In order to calculate the above integrals we will use the central moments of the normal distribution:
If $y=\frac{1}{\sqrt{2\pi\sigma^2}}e^{-\frac{(x-\mu)^2}{2\sigma^2}}$, then for an integer $p\geq 0$ it holds that
\begin{equation}\label{int-moments}
{\mathbb E}[y^p]=
\begin{cases}
0, &\text{if $p$ is odd},\\
\sigma^p(p-1)!!, &\text{ if $p$ is even}.
\end{cases}
\end{equation}

In our case $\sigma^2=3/n$. Thus, using 
\eqref{int-moments} and \eqref{remainder}, we get that
\begin{align*}
\frac{2\sqrt n}{\pi}\int_0^{x(a)}e^{-nx^2/6} |R_{6}(x)| dx&\leq \frac{2R\sqrt n}{\pi 6!}\int_0^{x(a)}e^{-nx^2/6}x^6 dx\\
&< \frac{2R\sqrt n}{\pi 6!}\int_0^{+\infty}e^{-nx^2/6}x^6 dx\\
&=\frac{2R\sqrt n}{\pi 6!}\frac{3^3}{n^3}5!!\\
&=\frac{9 R}{8\pi}\frac{1}{n^{5/2}}\\
&<\frac R2\frac{1}{n^{5/2}}=:e_2(n).
\end{align*}


Notice also that
\begin{align*}
&\frac{2\sqrt n}{\pi}\int_{0}^{+\infty}e^{-nx^2/6}\left (1-\frac{x^2}{20}
-\frac{13x^4}{30240}
\right )dx\\
&=\sqrt{\frac{3\pi}{2}}\frac{2\sqrt n}{\pi}\left (\frac{1}{n^{1/2}}-\frac{3}{20n^{3/2}}
-\frac{13}{1120n^{5/2}}
\right )\\
&=\sqrt{\frac 6\pi}\left (1-\frac{3}{20n}
-\frac{13}{1120n^2}
\right ).
\end{align*}

The complementary error function is defined as
$$\textrm{erfc}(x):=2\frac{1}{\sqrt{\pi}}\int_{x}^{+\infty} e^{-\tau^2} d\tau.$$
It is known that $\mathrm{erfc}(x)\leq e^{-x^2}$ for $x\geq0$. Then, taking into accout that $x(a)>1.07$, we obtain
\begin{align*}
\frac{2\sqrt n}{\pi}&\left |\int_{x(a)}^{+\infty}e^{-nx^2/6}\left(1-\frac{x^2}{20}
-\frac{13x^4}{30240}\right)dx\right |\\
&\leq \frac{2\sqrt n}{\pi}\int_{x(a)}^{+\infty}e^{-nx^2/6}\left|1-\frac{x^2}{20}
-\frac{13x^4}{30240}\right|dx\\
&\leq \frac{2\sqrt n}{\pi} \int_{x(a)}^{+\infty}e^{-nx^2/6}\left(1+\frac{x^2}{20}
+\frac{13x^4}{30240}\right)dx\\
&<\frac{2\sqrt n}{\pi}\int_{1}^{+\infty}e^{-nx^2/6}\left(1+\frac{x^2}{20}+\frac{13x^4}{30240}\right)dx\\
&=\sqrt{\frac{6}{\pi}}\mathrm{erfc}(\sqrt{n/6})\left (\frac{13+168n+1120 n^2}{1120 n^{2}}\right )+2e^{-n/6}\sqrt n\frac{117+1525n}{10080 \pi n^{2}}
\\
&<5e^{-n/6}=:e_3(n).
\end{align*}

Now, using the monotonicity of $e_1(n)$, we obtain that
\begin{multline*}
I(n+1)-I(n)\geq (I_a(n+1)-e_1(n+1))-(I_a(n)+e_1(n))\geq I_a(n+1)-I_a(n)-2e_1(n).
\end{multline*}
Furthermore,
$$I_a(n+1)\geq \sqrt{\frac 6\pi}\left (1-\frac{3}{20(n+1)}-\frac{13}{1120(n+1)^2}\right )-e_2(n+1)-e_3(n+1),$$
and
$$
I_a(n)\leq \sqrt{\frac 6\pi}\left (1-\frac{3}{20n}-\frac{13}{1120 n^2}\right)+e_2(n)+e_3(n).
$$
Therefore
\begin{align}
I(n+1) -I(n) & \geq \sqrt{\frac 6\pi}\left (\frac{3}{20n}-\frac{3}{20(n+1)}+\frac{13}{1120n^2}-\frac{13}{1120(n+1)^2} \right)\notag\\
&-2e_1(n)-e_2(n)-e_2(n+1)-e_3(n)-e_3(n+1)\notag\\
&=\sqrt{\frac 6\pi}\left (\frac{3}{20n(n+1)}+\frac{13(2n+1)}{1120n^2(n+1)^2}\right )\notag\\
&-
4a^{-n}-(e_2(n)+e_2(n+1)+e_3(n)+e_3(n+1))\notag\\
&>\sqrt{\frac 6\pi}\left (\frac{3}{20n(n+1)}
\right )
-4a^{-n}-
2e_2(n)-2e_3(n)\notag\\
&>\sqrt{\frac 6\pi}\left (\frac{3}{20n(n+1)}
\right )-
4a^{-n}
-\frac{R}{n^{5/2}}
-10e^{-n/6}
\notag\\
&\geq \sqrt{\frac 6\pi}\left (\frac{3}{20n(n+1)}
\right )-4\cdot  1.1^{-n}
-\frac{R}{n^{5/2}}
-10e^{-n/6}.
\label{estimate}
\end{align}
Clearly, there exists an $n_0$, such that for all $n\geq n_0$ the expression 
\eqref{estimate} is strictly positive. Thus, $\vol_{n-1}(C^n\cap H(u_0))$ is strictly monotonically increasing for $n\geq n_0$. 

Thus, we have finished the proof of Theorem~\ref{main-2}. 
\end{proof}

\noindent {\em Remark.}\, 
Figure~\ref{figure} suggests that $\mathrm{Vol}_{n-1}(C_n\cap H(u_0))$ is not only a monotonically increasing sequence but also concave, i.e., $2I(n+1)\geq I(n)+I(n+2)$ for $n\geq 3$. We note, without giving the details, that with a similar argument as in the proof of Theorem~\ref{main-2}, but using more terms of the Taylor expansion of $t(x)$, one can also show that
\begin{align*}
2 & I(n+1)- I(n)-I(n+2) \\ 
& \geq 2I_a(n+1)-I_a(n)-I_a(n+2) - \xi_1 e_1(n)-\xi_2 e_2(n)-\xi_3 e_3(n) \\
& \geq \frac{3\sqrt{3}}{5\sqrt{2\pi}}\frac{1}{n(n+1)(n+2)}+O(n^{-4})
-\xi_1 e_1(n)-\xi_2 e_2(n)-\xi_3 e_3(n),
\end{align*}
for some $\xi_i>0$, $i=1,2,3$. If we take into account sufficiently many terms of the Taylor series of $t(x)$, then we can guarantee that each error term is of smaller order than $n^{-3}$, and thus
there exists a number $n_1$ such that the sequence $I(n)$ is concave for all $n\geq n_1$.


\section{Proof of Theorem~\ref{main}}\label{sec:final}
In order to prove Theorem~\ref{main}, we need an explicit upper bound on the critical number $n_0$. Using a combination of interval arithmetic, automatic differentiation, and some analytic methods, we can obtain a rigorous upper estimate for the seventh derivative $|t^{(7)}(x)|$ in $x\in [0, x(a)]$. We provide the details of this argument in the Appendix. Here, we only quote the following upper bound (see Theorem \ref{app:thm:main} part \eqref{app:eq:result}):
\begin{equation}\label{R}
R\leq 0.50344.
\end{equation}
Now, substituting the estimate \eqref{R} in inequality \eqref{estimate}, we get that $n_0< 145$. Then, we can calculate the values of $I(n+1)-I(n)$ using \eqref{Frank-Riede} to the required accuracy, and verify monotonicity for all $3\leq n\leq 145$, see Figure~\ref{fig:graphs} below.

\begin{figure}[h]
	\includegraphics[scale=0.4]{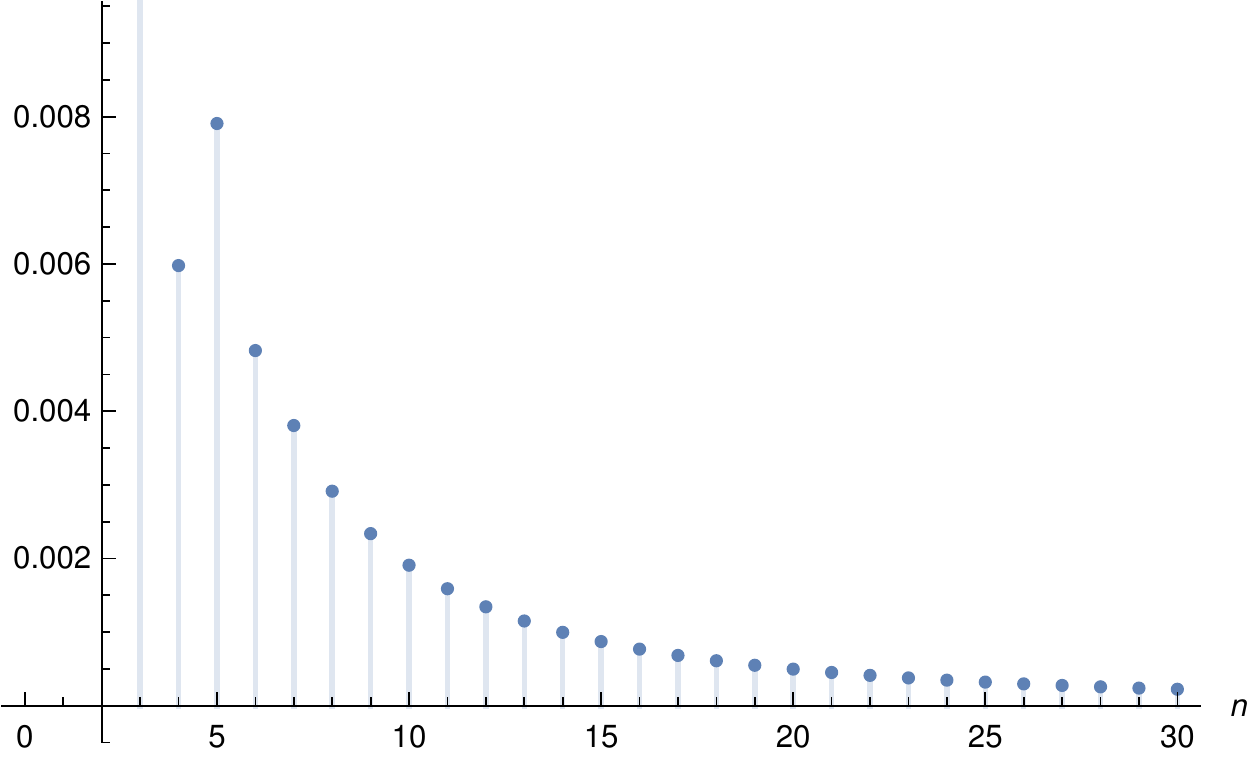}
	\includegraphics[scale=0.4]{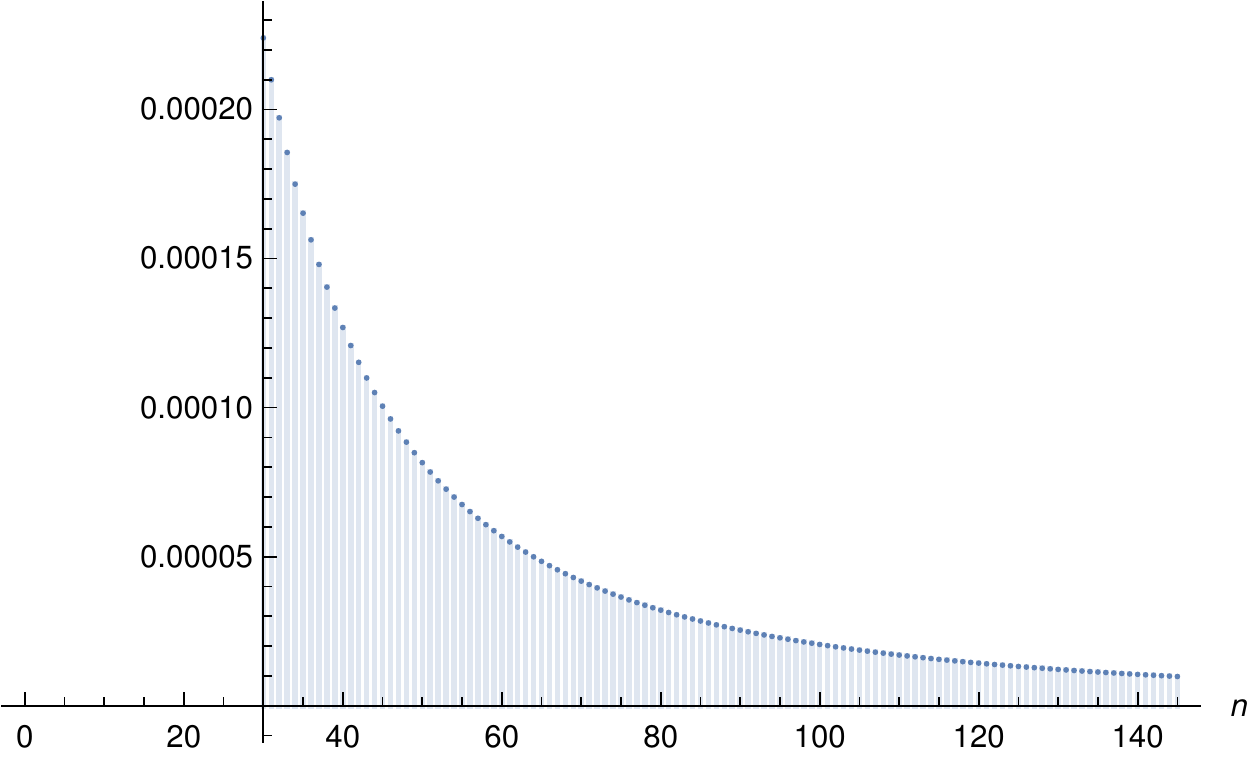}
\caption{$I(n+1)-I(n)$ for $3\leq n\leq 145$ plotted by {\em Mathematica}\label{fig:graphs}}
\end{figure}



\noindent{\em Remark.}\, Using the same ideas as above, one could show the concavity of $I(n)$ for $n\geq 3$.

\section{Acknowledgements} We would like to thank Juan Arias de Reyna for helpful discussions and suggestions.

\begin{bibdiv}
	\begin{biblist}
		
		\bib{Aliev2}{article}{
		author={Aliev, Iskander},
		title={On the volume of hyperplane sectionf of a $d$-cube},
		journal={arXiv:2004.00873v1},
		date={2020},
		}
		
		\bib{Aliev}{article}{
			author={Aliev, Iskander},
			title={Siegel’s Lemma and Sum-Distinct Sets},
			journal={Discrete Comput. Geom.},
			volume={39},
			date={2008},
			number={3},
			pages={59--66},
			issn={0179-5376},
		}
		
		\bib{B86}{article}{
			author={Ball, Keith},
			title={Cube slicing in ${\bf R}^n$},
			journal={Proc. Amer. Math. Soc.},
			volume={97},
			date={1986},
			number={3},
			pages={465--473},
			issn={0002-9939},
		}
		
		\bib{B89}{article}{
			author={Ball, Keith},
			title={Volumes of sections of cubes and related problems},
			conference={
				title={Geometric aspects of functional analysis (1987--88)},
			},
			book={
				series={Lecture Notes in Math.},
				volume={1376},
				publisher={Springer, Berlin},
			},
			date={1989},
			pages={251--260},
		}
		
		\bib{Berger}{book}{
			author={Berger, Marcel},
			title={Geometry revealed},
			note={A Jacob's ladder to modern higher geometry;
				Translated from the French by Lester Senechal},
			publisher={Springer, Heidelberg},
			date={2010},
			pages={xvi+831},
			isbn={978-3-540-70996-1},
		}
		
		\bib{Bu60}{article}{
		    author={Butler, R.},
		    title={On the evaluation of $\int_0^\infty (\sin^mt)/t^n dt$ by the trapezoidal rule},
		    journal={Amer. Math. Monthly},
		    volume={67},
		    date={1960},
		    pages={566--569},
		    issn={0002-9890},
		}
		
		\bib{FR}{article}{
		author={Frank, Rolfdieter},
			author={Riede, Harald},
			title={Hyperplane sections of the $n$-dimensional cube},
			journal={Amer. Math. Monthly},
			volume={119},
			date={2012},
			number={10},
			pages={868--872},
			issn={0002-9890},
		}
		
		\bib{H}{article}{
			author={Hensley, Douglas},
			title={Slicing the cube in ${\bf R}^{n}$ and probability (bounds for
				the measure of a central cube slice in ${\bf R}^{n}$ by probability
				methods)},
			journal={Proc. Amer. Math. Soc.},
			volume={73},
			date={1979},
			number={1},
			pages={95--100},
			issn={0002-9939},
		}
		
		\bib{Go45}{article}{
	    	author={Goddard, L. S.},
            title={LII. The accumulation of chance effects and the Gaussian frequency distribution},
            journal={The London, Edinburgh, and Dublin Philosophical Magazine and Journal of Science},
            volume={36},
            date={1945},
            number={257},
            pages={428--433},
            publisher={Taylor \& Francis},
}
		
		\bib{Gr45}{article}{
		    author={Grimsey, A. H. R.},
            title={XL. On the accumulation of chance effects and the Gaussian Frequency Distribution: To the editors of the Philosophical Magazine},
            journal={The London, Edinburgh, and Dublin Philosophical Magazine and Journal of Science},
            volume={36},
            date={1945},
            number={255},
            pages={294--295},
            publisher={Taylor \& Francis},
 }

		\bib{IT}{article}{
		author={Ivanov, Grigory},
		author={Tsiutsiurupa, Igor},
		title={On the volume of sectons of the cube},
		journal={arXiv:2004.02674},
		date={2020},
		}
		
	\bib{KK19}{article}{
		author={K\"{o}nig, Hermann},
		author={Koldobsky, Alexander},
		title={On the maximal perimeter of sections of the cube},
		journal={Adv. Math.},
		volume={346},
		date={2019},
		pages={773--804},
		issn={0001-8708},
	}
	

    \bib{P}{article}{
		author={P\'olya, G.},
		title={Berechnung eines bestimmten Integrals.},
		journal={Math. Ann.},
		volume={74},
		date={1913},
		pages={204--212},
		issn={0025-5831},
	}

\bib{Z}{book}{
	author={Zong, Chuanming},
	title={The cube: a window to convex and discrete geometry},
	series={Cambridge Tracts in Mathematics},
	volume={168},
	publisher={Cambridge University Press, Cambridge},
	date={2006},
	pages={x+174},
	isbn={978-0-521-85535-8},
	isbn={0-521-85535-7},
}

	\end{biblist}	
\end{bibdiv}

\vskip 4em

\section*{Appendix}


Consider the function
\begin{equation}
    \label{app:eq:xt}
    x(t) = \sqrt{-6 \log \left(\frac{\sin t}{t} \right)}, 
\end{equation}
where $t \in \tintv$. The fraction $\tfrac{\sin t}{t}$ is 
understood to be augmented with its limit at $t = 0$ 
that is $\tfrac{\sin 0}{0} = 1$. 
Then, the function $x(t)$ is analytic. 
\begin{theorem}
\label{app:thm:main}
The following holds true. 
\begin{enumerate}
    \item $x(t)$ is strictly increasing on $\tintv$ 
    and 
    \begin{equation*}
    \label{app:eq:xmax}
        x(t) \leq \xmax \qquad \mbox{for } t \in \tintv. 
    \end{equation*}
    \item $x(t)$ is invertible 
        with inverse $t(x)$, where $x \in \xintv$. 
    \item The \nth{\goalOrder} derivative of $t(x)$ attains the upper bound
    \begin{equation*}
        \label{app:eq:result}
        \left|\deriv{\goalOrder}{x}t (x)\right| \leq \resultBound \qquad 
            \mbox{for } x \in \xintv. 
    \end{equation*}
\end{enumerate}
\end{theorem}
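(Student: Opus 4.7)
The plan is to dispatch parts (1) and (2) by elementary analysis and then attack part (3) with interval arithmetic (IA) combined with automatic differentiation (AD), in keeping with the strategy announced by the paper.

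For part (1), since $x(t)^2 = -6 \log(\sin t / t)$, it suffices to show that $\sin t / t$ is strictly decreasing on $(0, 1.1]$. This follows from the classical fact that $(\sin t/t)' = (t\cos t - \sin t)/t^2 < 0$ on $(0, \pi/2)$, a consequence of $\tan t > t$ on that interval. Strict monotonicity of $x(t)$ on $\tintv$ then follows; the limit behaviour at $t = 0$ is controlled by the Taylor expansion $x(t) = t + t^3/60 + \cdots$ reproduced in the main text, which in particular gives $x(0) = 0$ and $x'(0) = 1 > 0$. For the bound $x(t) \leq \xmax$ on $\tintv$, monotonicity reduces the claim to $x(1.1) \leq \xmax$, which is a single IA evaluation. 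Part (2) is then immediate: strict monotonicity and continuity produce a continuous bijective inverse $t(x)$, and analyticity follows from the analytic inverse function theorem since $x'(t) \neq 0$ throughout $\tintv$.

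Part (3) is the real content. The plan is to derive a closed-form expression for $t^{(\goalOrder)}(x)$ that is amenable to IA + AD evaluation, and then verify the bound on a suitably fine subdivision of $\xintv$. Setting $\phi(t) := \log(\sin t / t)$, implicit differentiation of $\phi(t(x)) = -x^2/6$ via Fa\`a di Bruno's formula yields a triangular recursion expressing each $t^{(k)}(x)$ as a rational function of $\phi'(t(x)), \ldots, \phi^{(k)}(t(x))$ and of $t'(x), \ldots, t^{(k-1)}(x)$; the top-level factor of $1/\phi'(t(x))$ is what makes the system solvable at each step. Iterating up to $k = \goalOrder$ gives an explicit formula whose only non-polynomial building blocks are $\sin t$, $\cos t$, and $t$ itself, together with rational combinations thereof. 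Standard interval-arithmetic AD can then evaluate this expression to guaranteed precision on any subinterval of $\tintv$ into which $t(x)$ has been localised.

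The main obstacle is the behaviour near $x = 0$: each factor $1/\phi'(t)$ carries a removable singularity of the form $1/(t\cos t - \sin t)$, and although all such singularities cancel analytically in $t^{(\goalOrder)}(x)$, they produce severe interval blow-up if the formula is evaluated naively at or near $t = 0$. I would handle this by splitting $\xintv$ into a short initial window $[0, \delta]$ and the remainder $[\delta, x(1.1)]$. On the latter the implicit-differentiation formula is evaluated directly via IA on many narrow subintervals; the trigonometric denominators stay bounded away from zero, so the interval widths grow only mildly. On $[0, \delta]$ I would instead work with a high-order Taylor polynomial of $t(x)$ centred at $0$, compute its seventh derivative exactly in rational arithmetic, and add a validated tail bound, for instance via a majorant of the Taylor coefficients extracted from a Cauchy integral on a small complex disc where $x(t)$ remains nonvanishing. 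Balancing $\delta$, the subdivision width on $[\delta, x(1.1)]$, and the Taylor truncation order should yield the target constant $\resultBound$.
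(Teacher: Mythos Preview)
Your handling of parts (1) and (2) matches the paper's: monotonicity is elementary, the numerical bound $x(1.1)\le 1.123841$ is a single rigorous evaluation, and (2) follows from (1).

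For part (3) your plan is viable but differs from the paper in one structural respect. You propose a domain split: away from $x=0$ evaluate the implicit-differentiation recursion for $t^{(k)}$ built from $\phi(t)=\log(\sin t/t)$ directly in IA/AD, and on a short window $[0,\delta]$ fall back on a Taylor expansion of $t(x)$ at $0$ with a Cauchy-type tail bound. The paper instead removes the singularity \emph{uniformly} by rewriting
\[
x(t)=t\sqrt{h(t)},\qquad h(t)=(g\circ F_2)(t)\cdot(-6F(t)),
\]
with $F(t)=\bigl(\tfrac{\sin t}{t}-1\bigr)/t^2$ and $g(u)=\log(1+u)/u$, both of which are analytic and nonvanishing at $0$. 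Taylor coefficients of $F$ and $g$ are bounded by elementary tail estimates on the series for $\sin t$ and $\log(1+u)$ (no complex analysis), and AD propagates these through products, composition (Fa\`a di Bruno), and the square root to give enclosures of $(x)_0,\ldots,(x)_7$ at every centre $t_0\in[0,1.1]$. Fa\`a di Bruno is then applied once more, exactly as you suggest, to invert $x(t)$ and obtain $(t)_0,\ldots,(t)_7$.

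What each approach buys: the paper's reformulation yields a single pipeline valid on all of $[0,1.1]$, with error bounds that are explicit closed-form expressions, and sidesteps any need to certify a radius of analyticity for $t(x)$ in the complex plane. Your split would also succeed, but the Cauchy step requires you to exhibit and rigorously bound $|t(x)|$ on a complex circle---extra analytic overhead that the algebraic factorisation $x(t)=t\sqrt{h(t)}$ renders unnecessary.
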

The monotonicity stated in (1) is trivial, hence, 
one just needs to establish the containment $x(\tmax) \in \xintv$. 
Note that (2) is a consequence of (1), thus, in the following 
we will deal with evaluating $x(t)$ and proving (3). 

There are numerous computational steps involved. 
In order to obtain rigorous results, we have based 
our computations on two techniques, namely, 
{\em interval arithmetic (IA)} and 
{\em automatic differentiation (AD)} that 
are capable of providing mathematically sound bounds 
for functions and their derivatives alike. 
Besides the technical hurdle, severe difficulties arise at 
the left endpoint $t = 0$ as, when computing 
the derivatives of $x(t)$, we need to 
differentiate both $\sqrt{\cdot}$ and 
$\tfrac{\sin t}{t}$ at zero.
It was tempting to use Taylor models, 
an advanced combination of these two, 
however that could still not handle 
the aforementioned left endpoint directly, hence, we 
chose to stick with the straightforward application 
of the two techniques and used the CAPD package 
\cite{app:capd}. For a comprehensive 
overview of these topics we refer to 
\cite{app:rigref, app:rigref2, app:rigref3}. 

We emphasize that the major goal of Theorem~\ref{app:thm:main} 
is providing the given bounds, hence, we 
made little effort to obtain tighter results and were 
performing sub--optimal computations knowingly, 
in order to decrease the implementation burden. 

The key step to overcome the difficulties at $t = 0$ is to 
rephrase \eqref{app:eq:xt} as 
\begin{equation}
\label{app:eq:xt-scheme}
\begin{split}
    x(t) &= t \sqrt{h(t)}, \\
    h(t) &= (g \circ F_2)(t) \cdot (- 6 F(t)), \\
    g(t) &= \frac{\log(1 + t)}{t}, \\
    F_2(t) &= t^2 F(t), \qquad \qquad \mbox{and} \\
    F(t) &= \frac{\tfrac{\sin(t)}{t} - 1}{t^2}. 
\end{split}
\end{equation}
Section~\ref{app:sec:functions} details the 
considerations used for dealing with 
the functions appearing in \eqref{app:eq:xt-scheme}. 
In particular, 
Sections~\ref{app:sec:sintpt} and \ref{app:sec:sintpt-taylor-model} 
handle the functions 
$\tfrac{\sin t}{t}$ and $F(t)$; a computational 
scheme for their derivatives is provided. 
Then, we turn our attention to 
$\tfrac{\log(1 + t)}{t}$ and derive analogous results 
in Section~\ref{app:sec:logtpt}. The square root is discussed in 
Section~\ref{app:sec:sqrt}. Then, in Section~\ref{app:sec:chain}, 
we present a pure formula 
for the higher order chain--rule used to compose 
$g(t)$ and $F_2(t)$. Section~\ref{app:sec:xt} contains the results 
for $x(t)$ and its derivatives, in particular, the proof of 
the remaining part of (1) in Theorem~\ref{app:thm:main}. 
Section~\ref{app:sec:tx} deals with $t(x)$ by giving a general 
inversion procedure in Section~\ref{app:sec:inverse} and the 
final proof in Section~\ref{app:sec:proof}.

The codes performing the rigorous computational procedure 
described in this manuscript, 
together with the produced outputs, 
are publicly available at \cite{app:code}. 


\section{Bounding functions and their derivatives}
\label{app:sec:functions}
First, we will closely analyze some Taylor expansions centered 
at $t_0 = 0$ and derive bounds for Taylor coefficients of the 
very same functions expanded around another center point $\hat{t_0}$. 
Then, we include the higher order chain--rule for completeness. 

\subsection{The function $\frac{\sin t}{t}$}
\label{app:sec:sintpt}

The Taylor series of $\sin t$ 
centered at $t_0 = 0$ is given as 
\begin{equation*}
    \sin t = \sum_{k = 0}^\infty (-1)^k 
        \frac{t^{2k + 1}}{(2k + 1)!}
\end{equation*}
and is convergent for all $t \in \reals$. Consequently, we 
obtain the Taylor series of 
\begin{equation*}
    f(t) := 
        \begin{cases}
            \frac{\sin t}{t}, &\mbox{if } t > 0, \\
            1, &\mbox{if } t = 0\\
        \end{cases}
\end{equation*}
as
\begin{equation}
\label{app:eq:sintpt-taylor}
    f(t) = \sum_{k = 0}^\infty (-1)^k 
        \frac{t^{2k}}{(2k + 1)!}, 
\end{equation}
again, centered at $t_0 = 0$ with the same convergence radius. 
Therefore, 
\begin{equation}
\label{app:eq:sintpt}
    \frac{1}{m!} \deriv{m}{t}{f}(t) = 
    \frac{1}{m!} \sum_{k \geq m / 2}^\infty (-1)^k
        \frac{t^{2k - m}}{(2k - m)! \cdot (2k + 1)}
    \qquad \mbox{for } m = 0, 1, \ldots
\end{equation}
We shall bound these infinite 
series as follows. Let $N \geq m / 2$, then, 
define the finite part $S_f(t; N, m)$ and the 
remainder part $E_f(t; N, m)$ as 
\begin{equation}
\label{app:eq:sintpt-s-e}
\begin{split}
    S_f(t; N, m) &= \frac{1}{m!} 
        \sum_{k \geq m / 2}^N (-1)^k 
            \frac{t^{2k - m}}{(2k - m)! \cdot (2k + 1)} 
        \qquad \mbox{and}\\
    E_f(t; N, m) &= \frac{1}{m!} 
        \sum_{k = N + 1}^\infty 
            (-1)^k \frac{t^{2k - m}}{(2k - m)! \cdot (2k + 1)}.
\end{split}
\end{equation}
The following lemma establishes bounds for the remainder. 
\begin{lemma}
\label{app:lemma:sintpt-remainder}
Let $m, N \in \integers$ with $m \geq 0$ and $N \geq m / 2$. 
Then, 
\begin{equation*}
    E_f(t; N, m) \in 
    \frac{1}{m!} \frac{e^{\, t}}{(2N + 2 - m)!} 
        t^{2N + 2 - m} ~ \cdot ~ [-1, 1] 
\end{equation*}
for all $t \geq 0$. 
\end{lemma}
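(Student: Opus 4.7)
The plan is to reindex the remainder series so that the common factor $t^{2N+2-m}/(2N+2-m)!$ can be extracted, and then bound the remaining sum term by term against the Taylor series of $e^t$. First I would substitute $k = N+1+j$ with $j\geq 0$ in the defining series of $E_f(t;N,m)$, noting that $2k-m = (2N+2-m) + 2j$, to obtain
\begin{equation*}
E_f(t; N, m) = \frac{(-1)^{N+1}}{m!}\, t^{2N+2-m} \sum_{j=0}^{\infty} (-1)^{j}\,\frac{t^{2j}}{(2N+2-m+2j)!\,(2N+3+2j)}.
\end{equation*}
Since $N \geq m/2$ guarantees $2N+2-m \geq 2$, all factorials appearing are of nonnegative integers. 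Applying the triangle inequality to this alternating series gives
\begin{equation*}
|E_f(t; N, m)| \leq \frac{t^{2N+2-m}}{m!}\sum_{j=0}^{\infty} \frac{t^{2j}}{(2N+2-m+2j)!\,(2N+3+2j)}\qquad (t\geq 0).
\end{equation*}

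Next I would compare the $j$-th term of this series against $t^{2j}/(2j)!$, using two elementary inequalities valid whenever $j\geq 0$ and $N\geq m/2$: namely $2N+3+2j \geq 1$, and the binomial inequality $\binom{2N+2-m+2j}{2j}\geq 1$, which rearranges to $(2N+2-m)!\,(2j)! \leq (2N+2-m+2j)!$. Together these yield
\begin{equation*}
\frac{1}{(2N+2-m+2j)!\,(2N+3+2j)} \leq \frac{1}{(2N+2-m)!\,(2j)!}.
\end{equation*}

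Finally I would sum: using $\sum_{j=0}^\infty t^{2j}/(2j)! = \cosh t \leq e^{t}$ for $t\geq 0$, the bound reduces to
\begin{equation*}
|E_f(t;N,m)| \leq \frac{t^{2N+2-m}}{m!\,(2N+2-m)!}\,e^{t},
\end{equation*}
which is exactly the asserted containment in $\frac{1}{m!}\frac{e^{t}\,t^{2N+2-m}}{(2N+2-m)!}\cdot[-1,1]$. I do not expect a serious obstacle; the argument is entirely routine, and the only points requiring care are (i) correct bookkeeping of the reindexing so that all exponents and factorial arguments remain nonnegative integers, and (ii) noting that nonnegativity of $t$ is what allows the triangle inequality, and that $\cosh t \leq e^t$, to be applied without sign issues.
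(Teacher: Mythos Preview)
Your proof is correct. Both your argument and the paper's begin by taking absolute values and discarding the factor $2k+1\ge 1$ from the denominator; the difference lies only in how the remaining sum is compared to $e^t$. The paper recognises $\sum_{k\ge N+1} t^{2k-m}/(2k-m)!$ as a sub-sum of the exponential tail $\sum_{\ell\ge 2N+2-m} t^\ell/\ell!$ and bounds that tail by the Lagrange remainder $\tfrac{e^\xi}{K!}t^K\le \tfrac{e^t}{K!}t^K$. You instead factor out $t^{2N+2-m}$, use the combinatorial inequality $(2N+2-m+2j)!\ge (2N+2-m)!\,(2j)!$ to extract $(2N+2-m)!$, and bound the leftover series by $\cosh t\le e^t$. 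The two routes are equivalent in length and difficulty; yours avoids invoking Taylor's theorem with remainder, while the paper's makes the identification with the exponential tail more explicit.
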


\begin{proof}
Let $t \geq 0$ and consider 
\begin{equation*}
    | E_f(t; N, m) | \leq 
    \frac{1}{m!} 
        \sum_{k = N + 1}^\infty 
        \frac{t^{2k - m}}{(2k - m)! \cdot (2k + 1)} \leq 
    \frac{1}{m!} 
        \sum_{k = N + 1}^\infty \frac{t^{2k - m}}{(2k - m)!} \leq 
    \frac{1}{m!}
        \sum_{k = 2N + 2 - m}^\infty \frac{t^k}{k!}. 
\end{equation*}
Note that we have obtained the tail of the Taylor series of the 
exponential function centered at $t_0 = 0$. 
The corresponding Lagrange remainder gives us that 
for all integers $K \geq 0$
\begin{equation*}
    \sum_{k = K}^\infty \frac{t^k}{k!} = 
        \frac{1}{K!} \deriv{K}{t}{e^t}(\xi) \cdot t^{K}
\end{equation*}
holds with some $\xi \equiv \xi(K) \in [0, t]$. Observe that 
$\deriv{K}{t}{e^t}(\xi) = e^{\xi}$ 
and that attains its maximum at $\xi = t$ over $\xi \in [0, t]$.
Hence, we obtain 
\begin{equation*}
    \sum_{k = K}^\infty \frac{t^k}{k!} \leq 
        \frac{1}{K!} e^{\, t} \cdot t^K 
        \qquad \mbox{for } t \geq 0. 
\end{equation*}
Finally, setting $K = 2N + 2 - m$ and deriving a bound on 
$E_f(t; N, m)$ from the estimate for 
$|E_f(t; N, m)|$ concludes the proof. 
\end{proof}

Defining 
\begin{equation}
\label{app:eq:sintpt-E-formula}
    \mathbf{E}_f(t; N, m) = 
        \frac{1}{m!} \frac{e^{\, t}}{(2N + 2 - m)!} 
            t^{2N + 2 - m} ~ \cdot ~ [-1, 1]
\end{equation}
together with \eqref{app:eq:sintpt}, \eqref{app:eq:sintpt-s-e}, and 
Lemma~\ref{app:lemma:sintpt-remainder} gives a rigorous computational 
scheme for $f(t)$ and its derivatives, namely, 
\begin{equation*}
\label{app:eq:sintpt-computation}
    \frac{1}{m!} \deriv{m}{t}{f}(t) \in 
        S_f(t; N, m) + \mathbf{E}_f(t; N, m). 
\end{equation*}
We remark that $\lim_{N \to \infty} \mathbf{E}_f(t; N, m) \to \{0\}$ 
for all $t \in \reals$ and $m \geq 0$. 
Figure~\ref{app:fig:sintpt-E} gives an insight on how the obtained 
bound for the remainder behaves. 
\begin{figure}[ht]
    \centering
	\includegraphics[width=0.9\textwidth]{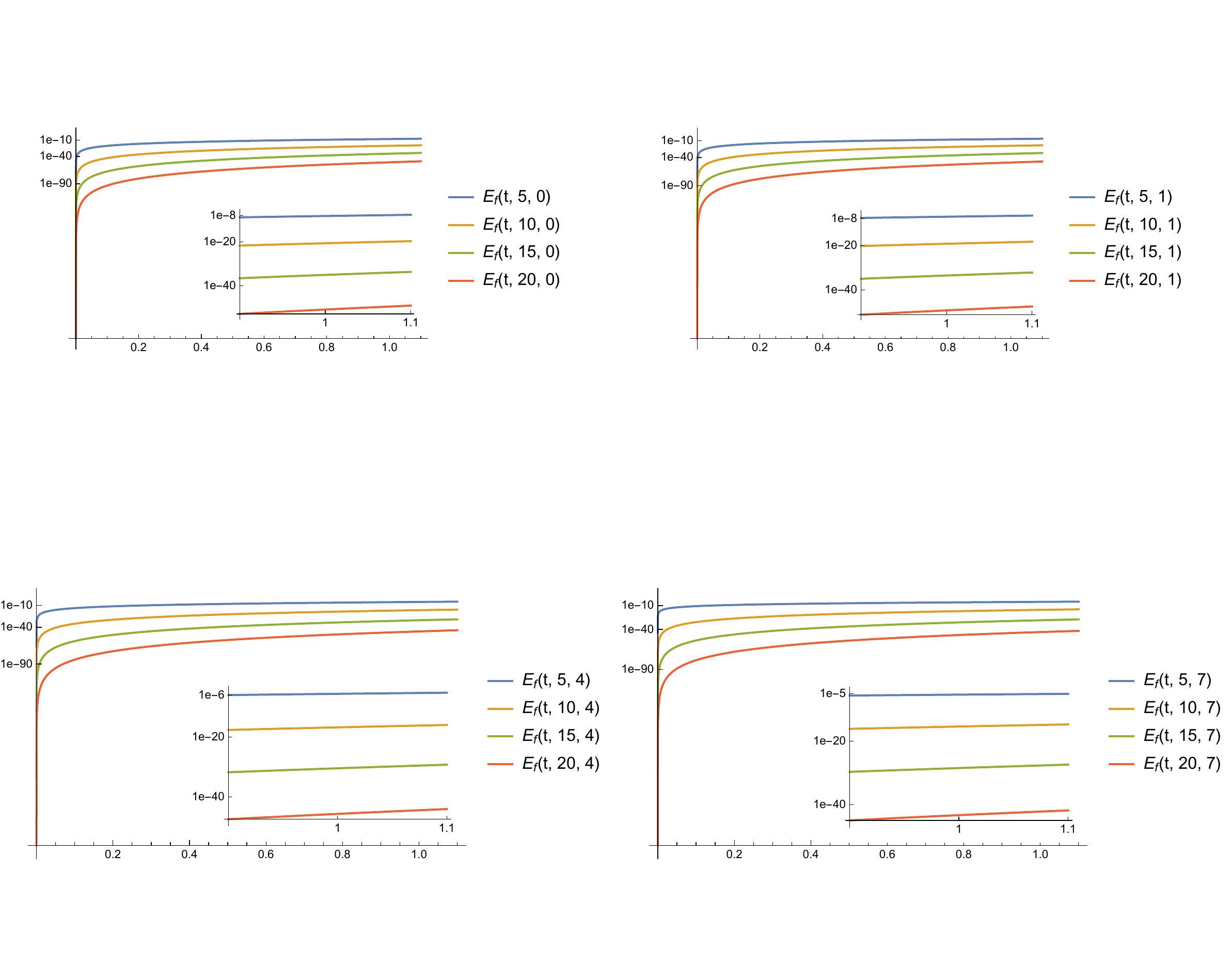}
\caption{
	    \label{app:fig:sintpt-E}
	    The upper bound of $\mathbf{E}_f(t; N, m)$ for various $(N, m)$ over $t \in [0, \tmax]$. 
	}
\end{figure}

\subsection{The function $1 + t^2 F(t) = \frac{\sin t}{t}$}
\label{app:sec:sintpt-taylor-model}
Even though 
there are no issues with directly computing 
$\log \left( f(t) \right)$ using the 
results above, as shown in \eqref{app:eq:xt-scheme}, we will need 
a more sophisticated approach in order to be able to 
tackle the final square root operation 
in the neighbourhood of zero. 
To that end, 
we rewrite expansion \eqref{app:eq:sintpt-taylor} as 
\begin{equation*}
\label{app:eq:sintpt-taylor-model}
    f(t) = 1 + t^2 F(t) = 1 + t^2 
        \sum_{k = 0}^\infty (-1)^{k + 1} 
            \frac{t^{2k}}{(2k + 3)!}, 
\end{equation*}
a \nth{2}--order Taylor model. 
Analogous arguments, as in Section~\ref{app:sec:sintpt}, 
lead to the following. 
\begin{lemma}
\label{app:lemma:sintpt-taylor-model}
\begin{equation*}
    \frac{1}{m!} \deriv{m}{t}{F}(t) \in 
        S_F(t; N, m) + \mathbf{E}_F(t; N, m),  
\end{equation*}
where
\begin{equation*}
\begin{split}
    S_F(t; N, m) &= \frac{1}{m!} 
        \sum_{k \geq m / 2}^N (-1)^{k + 1} 
            \frac{t^{2k - m}}{(2k - m)! \cdot 
                (2k + 1)(2k + 2)(2k + 3)}
    \qquad \mbox{and} \\
    \mathbf{E}_F(t; N, m) &= \mathbf{E}_f(t; N, m). 
\end{split}
\end{equation*}
\end{lemma}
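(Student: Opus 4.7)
The plan is to mirror the derivation of Lemma~\ref{app:lemma:sintpt-remainder} for the series $F$ in place of $f$, exploiting the fact that the Taylor coefficients of $F$ differ from those of $f$ only by extra denominator factors that make them smaller. First I would observe that
$$F(t) = \sum_{k=0}^{\infty} (-1)^{k+1}\frac{t^{2k}}{(2k+3)!}$$
has infinite radius of convergence (being essentially the series of $\sin t / t$ with its first two terms stripped away and divided by $t^2$), so termwise differentiation is valid for every $m$. Using $(2k)(2k-1)\cdots(2k-m+1) = (2k)!/(2k-m)!$ together with the identity $(2k)!/(2k+3)! = 1/[(2k+1)(2k+2)(2k+3)]$, one arrives at
$$\frac{1}{m!}\deriv{m}{t}{F}(t) = \frac{1}{m!}\sum_{k\geq m/2}^{\infty}(-1)^{k+1}\frac{t^{2k-m}}{(2k-m)!\,(2k+1)(2k+2)(2k+3)}.$$
Truncating this at $k = N$ yields precisely $S_F(t;N,m)$ and leaves the tail $E_F(t;N,m)$ as the direct analogue of \eqref{app:eq:sintpt-s-e}.

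The decisive observation is that each term of $E_F$ carries the extra denominator factor $(2k+2)(2k+3) \geq 1$ relative to the corresponding term of $E_f$. Hence, for $t \geq 0$,
$$|E_F(t;N,m)| \leq \frac{1}{m!}\sum_{k=N+1}^{\infty}\frac{t^{2k-m}}{(2k-m)!\,(2k+1)(2k+2)(2k+3)} \leq \frac{1}{m!}\sum_{k=N+1}^{\infty}\frac{t^{2k-m}}{(2k-m)!\,(2k+1)},$$
and the right-hand side is exactly the quantity estimated in the proof of Lemma~\ref{app:lemma:sintpt-remainder}. Invoking that lemma delivers
$$|E_F(t;N,m)| \leq \frac{1}{m!}\frac{e^{\,t}\,t^{2N+2-m}}{(2N+2-m)!},$$
so $E_F(t;N,m) \in \mathbf{E}_f(t;N,m)$, which justifies setting $\mathbf{E}_F(t;N,m) = \mathbf{E}_f(t;N,m)$ exactly as claimed.

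There is no serious obstacle: the proof is essentially a transcription of the argument for $f$, and the extra factors $(2k+2)(2k+3)$ only improve the estimate. The only bookkeeping point is the algebraic rearrangement in the first display, together with tracking the summation lower bound $k \geq m/2$ (which survives differentiation because low-degree monomials vanish identically). A sharper dedicated bound could of course be derived by retaining those factors, but, in the spirit of the authors' remark about keeping the implementation simple, recycling $\mathbf{E}_f$ is preferable.
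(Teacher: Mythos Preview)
Your proposal is correct and follows essentially the same approach as the paper, which does not give a formal proof but merely remarks that the argument is analogous to that of Lemma~\ref{app:lemma:sintpt-remainder}, with the denominator factor $(2k+1)(2k+2)(2k+3)$ being eliminated in the same way that $(2k+1)$ was there. Your write-up simply makes this explicit, and your observation that the extra factors $(2k+2)(2k+3)\geq 1$ allow one to reduce directly to the $f$-remainder bound is exactly the content of the paper's remark.
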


Note that the remainder bound is identical 
to the one in \eqref{app:eq:sintpt-E-formula} as the factor 
in the denominator $(2k + 1)(2k + 2)(2k + 3)$ may be eliminated 
the same way as $(2k + 1)$ in the proof of 
Lemma~\ref{app:lemma:sintpt-remainder}. 


\subsection{The function $\frac{\log (1 + t)}{t}$}
\label{app:sec:logtpt}
Following \eqref{app:eq:xt-scheme}, we 
will compute $x(t)$ using the form 
\begin{equation*}
\label{app:eq:xt-practical}
    x(t) = t
        \sqrt{-6 \, F(t) 
            \frac{\log \left( 1 + t^2 F(t) \right)}{t^2 F(t)}}. 
\end{equation*}
Thus, the next step is to analyze  
$g(t) = \tfrac{\log (1 + t)}{t}$, where $t \in (-1, 1)$. This interval 
comes from the well-known expansion of $\log (1 + t)$. 
At $t = 0$, we augment with the limit 
$g(0) := 1$.
Note that the argument of $g(\cdot)$ will be 
$t^2 F(t) = \frac{\sin t}{t} - 1$ that takes values 
roughly in $[-0.189, 0]$.

Let us start from the Taylor series of 
$\log(1 + t)$ centered at $t_0 = 0$, namely,  
\begin{equation*}
    \log(1 + t) = \sum_{k = 1}^\infty (-1)^{k + 1}  
        \frac{t^k}{k} 
\end{equation*}
that is convergent for $|t| < 1$. Then, formally, 
\begin{equation*}
    g(t) = 
        \sum_{k = 0}^\infty (-1)^{k}  
        \frac{t^k}{k + 1} 
\end{equation*}
and 
\begin{equation*}
    \frac{1}{m!} \deriv{m}{t}{g}(t) = 
    \frac{1}{m!} (-1)^m \sum_{k = 0}^\infty (-1)^k
        \frac{t^{k}}{k + m + 1} \, 
        \frac{(k + m)!}{k!}
    \qquad \mbox{for } m = 0, 1, \ldots
\end{equation*}
follow that may be simplified as 
\begin{equation*}
\label{app:eq:logrpr}    
    \frac{1}{m!} \deriv{m}{t}{g}(t) = 
    (-1)^m \sum_{k = 0}^\infty (-1)^k
        \binom{k + m}{m} \frac{t^{k}}{k + m + 1}. 
\end{equation*}
We define 
\begin{equation}
\label{app:eq:logrpr-s-e}
\begin{split}
    S_g(t; N, m) &= (-1)^m 
        \sum_{k = 0}^N (-1)^k
        \binom{k + m}{m} \frac{t^{k}}{k + m + 1}
        \qquad \mbox{and}\\
    E_g(t; N, m) &= (-1)^m 
        \sum_{k = N + 1}^\infty (-1)^k 
            \binom{k + m}{m} \frac{t^{k}}{k + m + 1}  
\end{split}
\end{equation}
for $N \geq 0$ (practically $N \geq m$ so that 
$k + m > 2m$ in the binomial coefficients in $E_g$). We may 
bound the remainder part as detailed below. 
\begin{lemma}
\label{app:lemma:logrpr-taylor}
Let $N \geq m \geq 0$ and $t \in (-1, 1)$. Then, 
\begin{equation*}
    |E_g(t; N, m)| \leq  
    \begin{cases}
        \frac{|t|^{N + 1}}{(1 - |t|)^{N + 2}}, 
            &\mbox{if } m = 0,\\
        \left( \frac{2 e}{m} \right)^m m!
            \binom{m + N + 1}{m} 
            \frac{|t|^{N + 1}}{(1 - |t|)^{m + N + 2}}, 
            &\mbox{else}.\\
    \end{cases}
\end{equation*}
\end{lemma}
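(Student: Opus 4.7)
The plan is to strip away the alternating sign via the triangle inequality, split into the cases $m=0$ and $m\geq 1$, and use the negative-binomial generating function $\sum_{j\geq 0}\binom{j+r}{r}|t|^j=(1-|t|)^{-(r+1)}$ to produce the $(1-|t|)^{-(m+N+2)}$ factor.

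The first step is a reduction: the triangle inequality gives
$$|E_g(t;N,m)| \leq \sum_{k=N+1}^\infty \binom{k+m}{m}\frac{|t|^k}{k+m+1},$$
and shifting the index by $j := k-(N+1)$ extracts a factor $|t|^{N+1}$ and rewrites the tail as $\sum_{j\geq 0}\binom{j+N+m+1}{m}|t|^j/(j+N+m+2)$.

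For $m=0$ everything is immediate: $\binom{j+N+1}{0}=1$, the factor $1/(j+N+2)\leq 1$ is dropped, and the resulting geometric series equals $|t|^{N+1}/(1-|t|)$. Since $|t|\in[0,1)$ implies $(1-|t|)^{N+1}\leq 1$, the bound is freely weakened to $|t|^{N+1}/(1-|t|)^{N+2}$, as claimed.

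For $m\geq 1$ the key observation is that
$$\frac{\binom{j+N+m+1}{m}}{\binom{j+N+m+1}{m+N+1}}=\prod_{i=1}^{N+1}\frac{m+i}{j+i}$$
is a decreasing function of $j\geq 0$ which attains its maximum $\binom{N+m+1}{m}$ at $j=0$. Hence $\binom{j+N+m+1}{m}\leq \binom{N+m+1}{m}\binom{j+N+m+1}{m+N+1}$ for every $j\geq 0$. Applying the generating-function identity above with $r=m+N+1$ (and again dropping $1/(j+N+m+2)\leq 1$) yields
$$|E_g(t;N,m)|\leq \binom{N+m+1}{m}\,\frac{|t|^{N+1}}{(1-|t|)^{m+N+2}}.$$
The prefactor $(2e/m)^m m!$ appearing in the lemma is then a loose but convenient inflation of the constant: by the crude Stirling estimate $(2e/m)^m m!\geq 2^m\sqrt{2\pi m}\geq 1$ for $m\geq 1$, one may harmlessly multiply the bound above by $(2e/m)^m m!$ to obtain the stated form.

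The main obstacle is identifying the right target generating function. Since the stated exponent of $1-|t|$ is $m+N+2$, one must compare $\binom{j+N+m+1}{m}$ to $\binom{j+N+m+1}{m+N+1}$ rather than to the more naive choice $\binom{j+m}{m}$; the latter would yield only a $(1-|t|)^{-(m+1)}$ denominator, tighter but not of the form demanded by the subsequent application of the lemma. The monotonicity of the binomial ratio displayed above is what makes the comparison clean and produces the explicit prefactor $\binom{N+m+1}{m}$ without any extraneous $j$-dependent terms.
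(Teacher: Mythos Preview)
Your proof is correct, but it follows a genuinely different route from the paper's. The paper first bounds the binomial coefficient itself via the crude estimate $\binom{k+m}{m}\leq \bigl(e(k+m)/m\bigr)^m\leq (2e/m)^m k^m$ (using $k>N\geq m$ so that $k+m\leq 2k$), reducing the problem to estimating $\sum_{k\geq N+1}k^m|t|^k$. It then recognizes this tail as dominated by the Lagrange remainder of the $m$-th derivative of $1/(1-|t|)$, which produces the factor $\dfrac{(m+N+1)!}{(N+1)!}\dfrac{|t|^{N+1}}{(1-|t|)^{m+N+2}}=m!\binom{m+N+1}{m}\dfrac{|t|^{N+1}}{(1-|t|)^{m+N+2}}$. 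In the paper, therefore, the constant $(2e/m)^m m!\binom{m+N+1}{m}$ is not an inflation but the natural output of the argument. By contrast, your combinatorial comparison $\binom{j+N+m+1}{m}\leq \binom{N+m+1}{m}\binom{j+N+m+1}{m+N+1}$ followed by the negative-binomial generating function yields the sharper bound $\binom{N+m+1}{m}\,|t|^{N+1}/(1-|t|)^{m+N+2}$ directly, and you only multiply by $(2e/m)^m m!\geq 1$ at the end to match the stated form. Your approach is cleaner and in fact shows that the extra factor $(2e/m)^m m!$ in the lemma is superfluous; the paper's approach, on the other hand, explains why that factor appears at all.
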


\begin{proof}
When $m = 0$, the binomial coefficient $\binom{k + m}{m} = 1$, thus, 
\begin{equation*}
    |E_g(t; N, 0)| \leq  
        \sum_{k = N + 1}^\infty 
            \frac{|t|^{k}}{k + 1} \leq 
            \sum_{k = N + 1}^\infty |t|^k. 
\end{equation*}
On the other hand, for $m > 0$, it is known that
\begin{equation*}
    \binom{k + m}{m} \leq \left( \frac{e \, (k + m)}{m} \right)^m.
\end{equation*}    
Hence, 
\begin{equation*}
\begin{split}
    |E_g(t; N, m)| \leq  
        &\sum_{k = N + 1}^\infty 
            \left( \frac{e \, (k + m)}{m} \right)^m
            \frac{|t|^{k}}{k + m + 1} \leq \\
        &\sum_{k = N + 1}^\infty 
            \left( \frac{e \, (1 + \tfrac{m}{k})}{m} \right)^m
            \frac{k^m |t|^{k}}{k + m + 1} \leq 
        \left( \frac{2 e}{m} \right)^m 
            \sum_{k = N + 1}^\infty k^m |t|^k.
\end{split}
\end{equation*}
Thus, for both cases, it is sufficient to bound the series 
\begin{equation*}
    \sum_{k = N + 1}^\infty k^m |t|^k 
\end{equation*}
for all $m = 0, 1, \ldots$ \,, $N \geq m$ and $|t| < 1$. 

In order to simplify the notation, let $T = |t| \in [0, 1)$ and 
consider the $m$-th derivative of the convergent geometric series 
\begin{equation*}
    \frac{1}{1 - T} = \sum_{k = 0}^\infty T^k 
\end{equation*}
that is
\begin{equation*}
    \left(\frac{1}{1 - T}\right)^{(m)} = \sum_{k = 0}^\infty 
        \frac{(k + m)!}{k!} T^k. 
\end{equation*}
We may easily bound the remainder of this series starting from 
$k = N + 1$ using, again, the Lagrange formula as
\begin{equation*}
    \sum_{k = N + 1}^\infty 
        \frac{(k + m)!}{k!} T^k = \deriv{m + N + 1}{T}{} \frac{1}{1 - T} 
            \Bigg\vert_{T = \xi} \cdot ~ 
            \frac{1}{(N + 1)!} \cdot T^{N + 1}  
\end{equation*}
with some $\xi \in [0, T]$. The $K$-th derivative of 
$\tfrac{1}{1 - T} = (1 - T)^{-1}$ is given by $K! ~ (1 - T)^{-(K + 1)}$ 
that is clearly maximal for $\xi = T$. Hence, 
\begin{equation*}
    \sum_{k = N + 1}^\infty \frac{(k + m)!}{k!} T^k \leq 
        (m + N + 1)! ~ (1 - T)^{-(m + N + 2)}
        \frac{1}{(N + 1)!} T^{N + 1} 
\end{equation*}
that concludes the proof by noting that
\begin{equation*}
     \sum_{k = N + 1}^\infty k^m T^k \leq 
     \sum_{k = N + 1}^\infty \frac{(k + m)!}{k!} T^k 
\end{equation*}
holds for all $N \geq m \geq 0$ and $T \in [0, 1)$. 
\end{proof}
In summary, letting 
\begin{equation*}
    \mathbf{E}_g(t; N, m) :=  [-1, 1] \cdot 
    \begin{cases}
        \frac{|t|^{N + 1}}{(1 - |t|)^{N + 2}}, 
            &\mbox{if } m = 0,\\
        \left( \frac{2 e}{m} \right)^m m! 
            \binom{m + N + 1}{m} 
            \frac{|t|^{N + 1}}{(1 - |t|)^{m + N + 2}}, 
            &\mbox{else},\\
    \end{cases}
\end{equation*}
provides the computational method 
\begin{equation}
\label{app:eq:logrpr-computation}
    \frac{1}{m!} \deriv{m}{t}{g}(t) \in
        S_g(t; N, m) + \mathbf{E}_g(t; N, m). 
\end{equation}
To analyze the dynamics of \eqref{app:eq:logrpr-computation}, 
observe that the behaviour of the remainder is governed by
\begin{equation*}
   \binom{m + N + 1}{m} \left( \frac{|t|}{1 - |t|} \right)^{N}  
\end{equation*}
for fixed $t \in (-1, 1)$ and $m \geq 0$. Using the same 
bound as above for the binomial, it is easy to see that, 
eventually, 
\begin{equation*}
    N^m \left( \frac{|t|}{1 - |t|} \right)^{N}  
\end{equation*}
determines the limit, when $N \to \infty$. 
Therefore, 
\begin{equation*}
    \lim_{N \to \, \infty} \mathbf{E}_g(t; N, m) = \{ 0 \},
\end{equation*}
when $\tfrac{|t|}{1 - |t|} < 1$ 
that is $|t| < \tfrac{1}{2}$. Recall that for our case this 
will be satisfied as $\tfrac{\sin 1.1}{1.1} - 1 \approx - 0.189$. 
The dynamics of the upper bound of $\mathbf{E}_g(t; N, m)$ is 
demonstrated on Figure~\ref{app:fig:logtpt-E}. 

\begin{figure}[ht]
    \centering
	\includegraphics[width=0.9\textwidth]{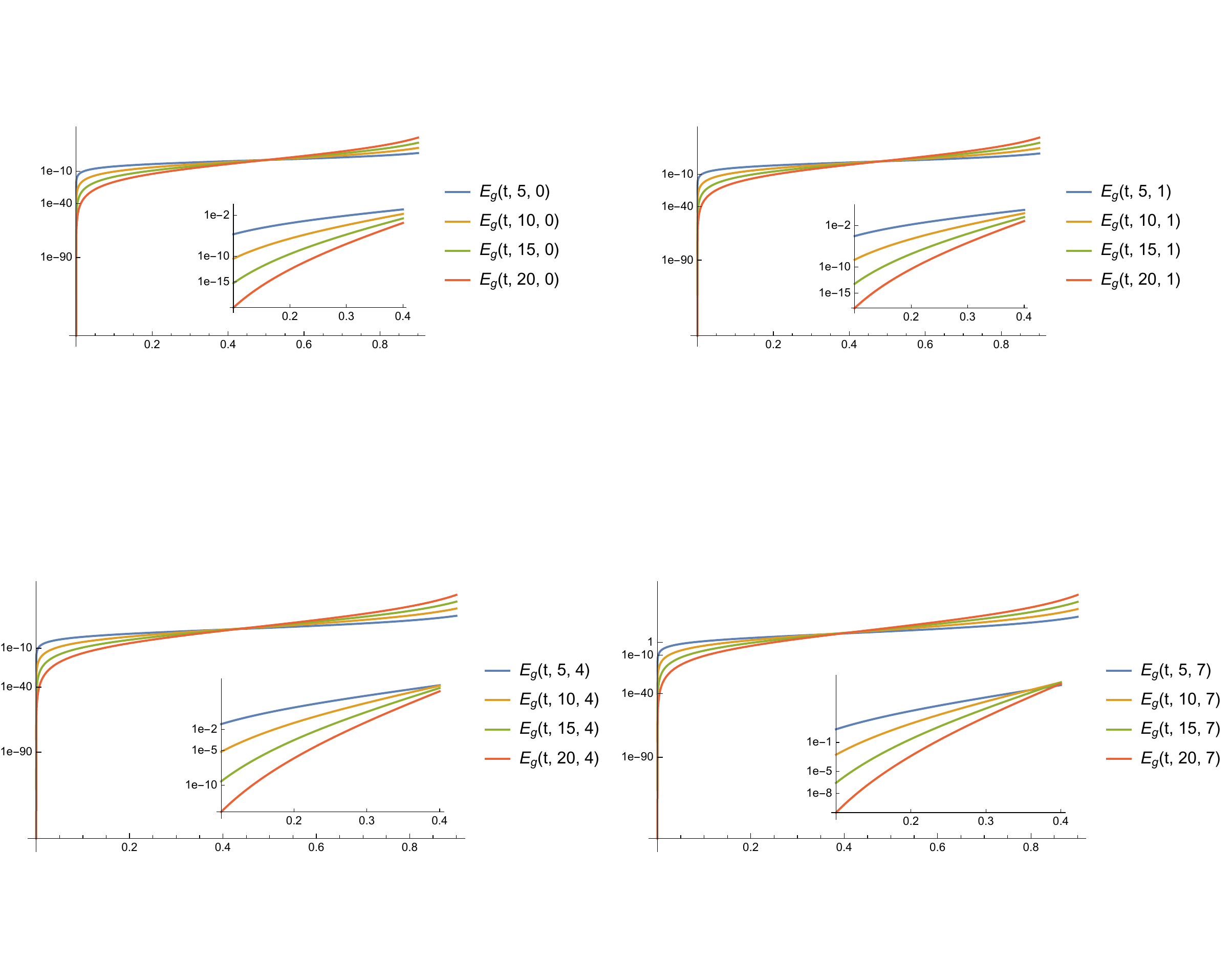}
	\caption{
	    \label{app:fig:logtpt-E}
	    The upper bound of $\mathbf{E}_g(t; N, m)$ for various values. 
	}	

\end{figure}


\subsection{The function $\sqrt{t^2 h(t)}$}
\label{app:sec:sqrt}
Assume $t \in [0, T]$ with some $T \geq 0$. 
By itself, the function $\sqrt{t}$ is not differentiable at $t_0 = 0$. 
However, if the argument is of the special form $t^2 h(t)$ 
with $h(t) \neq 0$, then 
the situation changes as 
\begin{equation*}
    \sqrt{t^2 h(t)} = t \sqrt{h(t)}, 
\end{equation*}
hence,  
\begin{equation*}
    \deriv{}{t}{} \sqrt{t^2 h(t)} = 
        \sqrt{h(t)} + t \frac{h'(t)}{2 \sqrt{h(t)}}
\end{equation*}
implying no difficulties for all $t \in [0, T]$. 


\subsection{The chain--rule}
\label{app:sec:chain}
There are numerous known formulae for the 
higher order chain rule \cite{app:faa}. We shall use 
the classical one named after Faà di Bruno 
that is written as follows. 

\begin{lemma}[Faà di Bruno]
\label{app:lemma:faa}
Let $f \colon I \to U$ and $g \colon U \to V$ be analytic functions, 
where $I, U, V \subseteq R$ are connected subsets. 
Consider the Taylor expansions 
$f(t) = \sum_{k = 0}^\infty \taylor{f}{k} (t - t_0)^k$ 
centered at $t_0 \in I$ with $t \in I$ and 
$g(x) = \sum_{k = 0}^\infty \taylor{g}{k} (x - x_0)^k$ 
centered at $x_0 = f(t_0)$ for $x \in U$. Then, 
the composite function $(g \circ f)$ attains the Taylor expansion 
$(g \circ f)(t) = \sum_{k = 0}^\infty \taylor{g \circ f}{k} (t - t_0)^k$ 
centered at $t_0$ with the coefficients 
\begin{equation}
\label{app:eq:faa}
\begin{split}
    \taylor{g \circ f}{0} &= \taylor{g}{0}
        \qquad \mbox{and} \\
    \taylor{g \circ f}{k} &= 
        \sum_{\substack{
            b_1 + 2 b_2 + \ldots + k b_k = k \\
            m := b_1 + b_2 + \ldots + b_k
        }} ~ 
        \frac{m!}{b_1! b_2! \ldots b_k!} \taylor{g}{m} 
        \prod_{i = 1}^k 
        \Big( \taylor{f}{i} \Big)^{b_i},
\end{split}
\end{equation}
where $k \geq 1$ and 
$b_1, \ldots, b_k$ are nonnegative integers. 
\end{lemma}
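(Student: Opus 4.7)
The plan is to prove Faà di Bruno's formula by direct substitution of the Taylor series of $f$ into the Taylor series of $g$, followed by an application of the multinomial theorem and a bookkeeping argument that collects powers of $(t-t_0)$.

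First I would set $u = t - t_0$ and write $f(t) - f(t_0) = \sum_{i \geq 1} \taylor{f}{i}\, u^i$, noting that the constant term vanishes by the choice $x_0 = f(t_0)$. Analyticity of $f$ at $t_0$ gives an $r > 0$ such that this series converges absolutely for $|u| < r$ and $f(t) - f(t_0)$ stays inside the disk of convergence of $g$ around $x_0$ for $|u|$ sufficiently small. Substituting into $g$ yields
\begin{equation*}
(g\circ f)(t) = \sum_{m=0}^\infty \taylor{g}{m} \left( \sum_{i \geq 1} \taylor{f}{i}\, u^i \right)^m.
\end{equation*}

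Next I would apply the multinomial theorem to the inner $m$-th power: for each $m \geq 1$,
\begin{equation*}
\left( \sum_{i \geq 1} \taylor{f}{i}\, u^i \right)^m = \sum_{\substack{b_1, b_2, \ldots \geq 0 \\ b_1 + b_2 + \cdots = m}} \frac{m!}{b_1!\, b_2! \cdots} \prod_{i \geq 1} \bigl( \taylor{f}{i} \bigr)^{b_i} u^{\, i b_i}.
\end{equation*}
The total power of $u$ in a given term is $\sum_i i\, b_i$. Absolute convergence (guaranteed by analyticity) justifies rearranging the double sum and collecting terms according to the exponent of $u$. Fixing $k \geq 1$ and grouping all contributions with $\sum_i i b_i = k$, the coefficient of $u^k$ becomes
\begin{equation*}
\taylor{g \circ f}{k} = \sum_{\substack{b_1 + 2b_2 + \cdots + k b_k = k}} \frac{m!}{b_1!\, b_2! \cdots b_k!}\, \taylor{g}{m} \prod_{i=1}^{k} \bigl(\taylor{f}{i}\bigr)^{b_i},
\end{equation*}
where $m = b_1 + \cdots + b_k$ is determined by the multi-index; note also that only indices $i \leq k$ can contribute since $i b_i \leq k$. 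The case $k = 0$ is immediate: setting $t = t_0$ gives $(g \circ f)(t_0) = g(x_0) = \taylor{g}{0}$.

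The main obstacle is the rigorous justification of the rearrangement of the double series, since $m$ ranges over all nonnegative integers and each inner sum itself is infinite. I would handle this by fixing a radius $\rho$ small enough that both (i) $\sum_{i \geq 1} |\taylor{f}{i}|\, \rho^i < R_g$, where $R_g$ is the radius of convergence of $g$ around $x_0$, and (ii) $|u| \leq \rho$. Then the double series has absolutely summable majorant $\sum_m |\taylor{g}{m}| \bigl( \sum_i |\taylor{f}{i}|\, \rho^i \bigr)^m$, so Fubini/Tonelli (or Cauchy's double-series theorem) permits the reordering. Uniqueness of the Taylor expansion of the analytic composite $g \circ f$ at $t_0$ then identifies the collected coefficient of $u^k$ with $\taylor{g \circ f}{k}$, completing the proof.
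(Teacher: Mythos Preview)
The paper does not prove this lemma at all: it is stated as a classical result and attributed to the reference \cite{app:faa} (Johnson's survey on Fa\`a di Bruno's formula), with only a remark that the notation has been shifted from derivatives to Taylor coefficients. Your argument---substituting the Taylor series of $f$ into that of $g$, expanding each $m$-th power via the multinomial theorem, and regrouping by the exponent of $u=t-t_0$ with absolute convergence justifying the rearrangement---is a correct and standard derivation, so you have in fact supplied a proof where the paper simply quotes the result.
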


Note that we altered the notation somewhat compared to \cite{app:faa} 
and use Taylor coefficients instead of derivatives, 
this should not cause confusion.


\section{Derivatives of $x(t)$}
\label{app:sec:xt}
Using the combination of results of Section~\ref{app:sec:functions}, we 
may attempt to evaluate $x(t)$ and its derivatives based on the steps  
detailed in \eqref{app:eq:xt-scheme}. The expansions of $-6$, $t$, 
and $t^2$ are trivial, so is the application of the product rule; 
for the square root, the computation of Taylor coefficients 
is straightforward \cite{app:rigref, app:rigref2}. 

We used a uniform $N = 20$ when executing our program and imposed 
$0 \not\in \deriv{1}{t}{x}(\tintv)$ 
to hold as an additional requirement 
needed for the inverse computations (that was never violated). 
We have subdivided the original $\tintv$ into smaller intervals 
so that each was no longer than $\approx 0.001$. For each 
of these intervals we attempted to compute the expansion 
of $x(t)$ directly from \eqref{app:eq:xt} as well. This clearly failed 
for those close to $t = 0$, however, whenever it succeeded, 
we compared it with the results from scheme 
\eqref{app:eq:xt-scheme} and used the intersection of the 
two, somewhat independent, results. 

The obtained enclosures are given in Table~\ref{app:table:xt}. 
Each row presents the interval hull of the rigorous bounds 
obtained over all small subintervals. 
In particular, the first one 
establishes the remaining part 
of (1) in Theorem~\ref{app:thm:main}. 

{\renewcommand{\arraystretch}{1.3}
\begin{table}[!htb]
\centering
\begin{tabular}{|l|l|}
\hline
\multicolumn{1}{|l|}{\bf Taylor coefficient} & 
\multicolumn{1}{|l|}{\bf for any $t_0 \in \tintv$ 
    is contained in }\\
\thickhline
$\taylor{x}{0}$ & $[0,1.123840883419833]$ \\
$\taylor{x}{1}$ & $[0.9999999735553898,1.068240487593705]$ \\
$\taylor{x}{2}$ & $[-2.685547075142236\scie05,0.06993582359879109]$ \\
$\taylor{x}{3}$ & $[0.01666661349471042,0.03208155257501275]$ \\
$\taylor{x}{4}$ & $[-9.697809942521872\scie06,0.009448128475336482]$ \\
$\taylor{x}{5}$ & $[0.0009192700177718516,0.003964692939629423]$ \\
$\taylor{x}{6}$ & $[-2.388397092922722\scie06,0.00154475634905294]$ \\
$\taylor{x}{7}$ & $[6.401450846056105\scie05,0.0006646723554643784]$ \\
\hline
\end{tabular}
\vspace{0.5cm}
\caption{{\bf 
    Bounds on Taylor coefficients of $x(t)$ 
    centered at $t_0 \in \tintv$. 
}}
\label{app:table:xt}
\end{table}}


\section{Derivatives of $t(x)$}
\label{app:sec:tx}
Now, that we have computed rigorous bounds 
for the Taylor coefficients of $x(t)$ up to 
the desired order for any center $t_0 \in \tintv$, 
we turn our attention to its inverse $t(x)$. 
First, in Section~\ref{app:sec:inverse}, we 
present the general formula for computing the inverse expansion, 
then, we include the results of our computation 
for $t(x)$ in Section~\ref{app:sec:proof}, thereby concluding 
the proof of (3) in Theorem~\ref{app:thm:main}. 


\subsection{Derivatives of the inverse function}
\label{app:sec:inverse}
Practical formulae for Taylor expansion of the inverse function 
based on the coefficients of the original one 
are rather scarce. For our purposes it is reasonable to 
utilize the result of Faà di Bruno, seen in Section~\ref{app:sec:chain}, 
directly. 

Assume that $x(t)$ has the expansion 
$x(t) = \sum_{k = 0}^\infty \taylor{x}{k} (t - t_0)^k$ centered at $t_0$ and 
$\taylor{x}{1} \neq 0$. Then, for the inverse we may construct the expansion 
$t(x) = \sum_{k = 0}^\infty \taylor{t}{k} (x - x_0)^k$
centered at $x_0 = x(t_0)$ as  
\begin{equation}
\label{app:eq:inverse-rule} 
\begin{split}
    \taylor{t}{0} &= t_0, \\
    \taylor{t}{1} &= \frac{1}{\taylor{x}{1}}, 
        \qquad \mbox{and} \\
    \taylor{t}{k} &= 
        - \sum_{\substack{
            b_1 + 2 b_2 + \ldots + k b_k = k \\
            m := b_1 + b_2 + \ldots + b_k \\
            m \neq k
        }} ~ 
        \frac{m!}{b_1! b_2! \ldots b_k!} \taylor{t}{m} 
        \Big( \taylor{x}{1} \Big)^{b_1 - k} ~
        \prod_{i = 2}^k 
        \Big( \taylor{x}{i} \Big)^{b_i},  
\end{split}
\end{equation}
for $k \geq 2$. 
The first two coefficients are trivial. The general part is 
a consequence of Lemma~\ref{app:lemma:faa} applied to 
$(t \circ x)(t)$ by observing that 
$\taylor{t \circ x}{k} = 0$ for $k \geq 2$ and that
in the sum the only term 
containing $\taylor{t}{k}$ 
(that is $\taylor{g}{k}$ in the original Lemma) 
is given by $b_1 = k$ and $b_i = 0$ for all other $i$-s as
\begin{equation*}
    \frac{k!}{k! \, 1! \, \ldots 1!} \taylor{t}{k} 
    \Big( \taylor{x}{1} \Big)^{k} = \taylor{t}{k} 
    \Big( \taylor{x}{1} \Big)^{k}. 
\end{equation*}


\subsection{Proof of (3) in Theorem~\ref{app:thm:main}}
\label{app:sec:proof}

We have applied \eqref{app:eq:inverse-rule} on  
each of the subintervals and  
the corresponding expansion of $x(t)$, see Section~\ref{app:sec:xt}. 
The interval hull of the results is presented in Table~\ref{app:table:tx}. 
Using that 
$\taylor{t}{\goalOrder} = 
    \tfrac{1}{\goalOrder!} 
    \deriv{\goalOrder}{x}{t}\left( x_0 \right)$, 
we directly obtain the claim of (3) in 
Theorem~\ref{app:thm:main}. 

{\renewcommand{\arraystretch}{1.3}
\begin{table}[!h]
\centering
\begin{tabular}{|l|l|}
\hline
\multicolumn{1}{|l|}{\bf Taylor coefficient} & 
\multicolumn{1}{|l|}{\bf for any $x_0 \in \xintv$ 
    is contained in }\\
\thickhline
$\taylor{t}{0}$ & $[0,1.1]$ \\
$\taylor{t}{1}$ & $[0.9361187968568556,1.000000026444611]$ \\
$\taylor{t}{2}$ & $[-0.05741807585325204,2.685547249459667\scie05]$ \\
$\taylor{t}{3}$ & $[-0.01769296208858369,-0.01666567358306551]$ \\
$\taylor{t}{4}$ & $[-0.0004154319065142972,1.41737288630298\scie05]$ \\
$\taylor{t}{5}$ & $[-8.896646385491572\scie05,0.0001353167787555311]$ \\
$\taylor{t}{6}$ & $[-4.791003638747524\scie05,0.0001399782086031966]$ \\
$\taylor{t}{7}$ & $[-7.620438955153176\scie05,9.988885937812383\scie05]$ \\
\hline
\end{tabular}
\vspace{0.5cm}
\caption{{\bf 
    Bounds on Taylor coefficients of $t(x)$ 
    centered at $x_0 \in \xintv$. 
}}
\label{app:table:tx}
\end{table}}


\end{document}